\documentclass[11pt,a4paper]{article}

\usepackage{amsmath}
\usepackage{amsthm}
\usepackage{amssymb}
\usepackage{amscd}
\usepackage[all]{xy}

\title{Vanishing Theorems on Toric Varieties \\
in Positive Characteristic
\footnote{This paper was partially supported by the National Natural Science Foundation of China
(Grant No.\ 11231003 and 11271070), and the Scientific Research Foundation for the Returned Overseas
Chinese Scholars, State Education Ministry.}}
\author{Qihong Xie}
\date{Dedicated to Professor Yujiro Kawamata for his sixtieth birthday}
\pagestyle{plain}


\theoremstyle{plain}
\newtheorem{prop}{Proposition}[section]
\newtheorem{lem}[prop]{Lemma}
\newtheorem{thm}[prop]{Theorem}
\newtheorem{cor}[prop]{Corollary}

\theoremstyle{definition}
\newtheorem{defn}[prop]{Definition}
\newtheorem*{ack}{Acknowledgments}
\newtheorem*{nota}{Notation}

\theoremstyle{remark}

\newtheorem{ex}[prop]{Example}

\newcommand{\Q}{\mathbb Q}
\newcommand{\R}{\mathbb R}
\newcommand{\C}{\mathbb C}
\newcommand{\Z}{\mathbb Z}

\newcommand{\A}{\mathbb A}
\newcommand{\PP}{\mathbb P}
\newcommand{\OO}{\mathcal O}
\newcommand{\II}{\mathcal I}

\newcommand{\HH}{\mathcal H}
\newcommand{\LL}{\mathcal L}

\newcommand{\BB}{\mathcal B}
\newcommand{\ZZ}{\mathcal Z}

\newcommand{\Pic}{\mathop{\rm Pic}\nolimits}
\newcommand{\Div}{\mathop{\rm Div}\nolimits}

\newcommand{\Supp}{\mathop{\rm Supp}\nolimits}

\newcommand{\ch}{\mathop{\rm char}\nolimits}

\newcommand{\HOM}{\mathop{\mathcal Hom}\nolimits}

\newcommand{\spec}{\mathop{\rm Spec}\nolimits}

\newcommand{\divisor}{\mathop{\rm div}\nolimits}

\newcommand{\codim}{\mathop{\rm codim}\nolimits}
\newcommand{\ra}{\rightarrow}

\newcommand{\wt}{\widetilde}

\setlength{\oddsidemargin}{20pt}
\setlength{\evensidemargin}{20pt}
\setlength{\textwidth}{410pt}
\setlength{\textheight}{650pt}
\setlength{\topmargin}{-20pt}

\begin{document}

\maketitle

\begin{abstract}
We use the liftability of the relative Frobenius morphism of toric
varieties and the strong liftability of toric varieties to prove
the Bott vanishing theorem, the degeneration of the Hodge to de Rham
spectral sequence and the Kawamata-Viehweg vanishing theorem for
log pairs on toric varieties in positive characteristic. These results
generalize those results of Danilov, Buch-Thomsen-Lauritzen-Mehta,
Musta\c{t}\v{a} and Fujino to the case where concerned Weil divisors
are not necessarily torus invariant.
\end{abstract}

\setcounter{section}{0}
\section{Introduction}\label{S1}

Throughout this paper, we always work over {\it a perfect field $k$
of characteristic $p>0$} unless otherwise stated. The main purpose
of this paper is to develop various vanishing theorems on toric
varieties in positive characteristic by means of the lifting technique,
which consists of two points: one is the liftability of the relative
Frobenius morphism of toric varieties, and the other is the strong
liftability of toric varieties.

The following are the main theorems in this paper, which generalize
those results of Danilov \cite{da}, Buch-Thomsen-Lauritzen-Mehta
\cite{btlm}, Musta\c{t}\v{a} \cite{mu} and Fujino \cite{fu07} to the
case where concerned Weil divisors are not necessarily torus invariant.
See Definition \ref{2.12} for the definition of
$\wt{\Omega}^\bullet_X(\log D)$, the Zariski-de Rham complex of $X$
with logarithmic poles along $D$.

\begin{thm}[Bott vanishing]\label{1.1}
Let $X$ be a projective toric variety over $k$, $D$ a reduced Weil divisor
on $X$, and $\LL$ an ample invertible sheaf on $X$. Then
$H^j(X,\wt{\Omega}^i_X(\log D)\otimes\LL)=0$ holds for any $j>0$ and any $i\geq 0$.
\end{thm}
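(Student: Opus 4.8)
\emph{Setup via the two lifting tools.} The plan is to run the Deligne--Illusie method, fed by the two liftability inputs, and then descend over Frobenius twists of $\LL$. First I would use the strong liftability of the projective toric variety $X$ to fix a lift $\wt X$ of $X$ over $W_2(k)$ carrying a lift $\wt D$ of the reduced Weil divisor $D$; since the Frobenius twist $X^{(1)}$ is again toric, it admits an analogous pair $(\wt{X^{(1)}},\wt{D^{(1)}})$. Then the liftability of the relative Frobenius morphism of toric varieties lets me lift $F=F_{X/k}\colon X\to X^{(1)}$ to a $W_2(k)$-morphism $\wt F\colon\wt X\to\wt{X^{(1)}}$ compatible with $\wt D$, $\wt{D^{(1)}}$. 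On the smooth locus $U=X\setminus\Sing X$ (whose complement has codimension $\ge 2$) the map $p^{-1}d\wt F$ yields the classical Deligne--Illusie splitting; extending by reflexivity across $\Sing X$ should give an $\OO_{X^{(1)}}$-linear quasi-isomorphism
\[
\bigoplus_{i\ge 0}\wt{\Omega}^i_{X^{(1)}}(\log D^{(1)})[-i]\ \xrightarrow{\ \sim\ }\ F_*\wt{\Omega}^\bullet_X(\log D)
\]
in the derived category of $X^{(1)}$, together with the Cartier sequences $0\to \wt Z^i\to\wt{\Omega}^i_X(\log D)\xrightarrow{\,d\,}\wt B^{i+1}\to 0$ and $0\to F_*\wt B^i\to F_*\wt Z^i\xrightarrow{\,C\,}\wt{\Omega}^i_{X^{(1)}}(\log D^{(1)})\to 0$, the last of which splits. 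Because $\wt F$ is a \emph{global} lift of Frobenius, no bound $\dim X<p$ is needed --- this is exactly what should let us reach full Bott vanishing (all $j>0$) rather than only its Akizuki--Nakano range.

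\emph{The vanishing by Frobenius descent.} Fix an ample invertible sheaf $\LL$ on $X$. Since $k$ is perfect, under $X^{(1)}\cong X$ there is a corresponding ample sheaf $\LL^{(1)}$ on $X^{(1)}$ with $F^*\LL^{(1)}\cong\LL^{\otimes p}$. Tensoring the decomposition by $\LL^{(1)}$ and taking hypercohomology (the decomposition being $\OO_{X^{(1)}}$-linear, this is legitimate) gives for each $n$
\[
\bigoplus_{i+j=n}H^j\bigl(X^{(1)},\wt{\Omega}^i_{X^{(1)}}(\log D^{(1)})\otimes\LL^{(1)}\bigr)\ \cong\ \mathbb H^n\bigl(X,\wt{\Omega}^\bullet_X(\log D)\otimes\LL^{\otimes p}\bigr),
\]
and, unwinding this through the split Cartier sequences, each group $H^j\bigl(X^{(1)},\wt{\Omega}^i_{X^{(1)}}(\log D^{(1)})\otimes\LL^{(1)}\bigr)$ is dominated by the groups $H^j\bigl(X,\wt{\Omega}^{i'}_X(\log D)\otimes\LL^{\otimes p}\bigr)$ with $i'\ge i$. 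Consequently, for the statement
\[
P(\MM):\quad H^j\bigl(X,\wt{\Omega}^{i}_X(\log D)\otimes\MM\bigr)=0\ \ \text{for all }j>0\text{ and all }i\ge 0,
\]
one gets the implication: if $P(\MM^{\otimes p})$ holds for every ample $\MM$, then $P(\MM)$ holds for every ample $\MM$. Iterating $m$ times, the theorem for $\LL$ is reduced to $P(\LL^{\otimes p^m})$, which is true once $p^m$ is large enough by Serre vanishing. Choosing such an $m$ (it may depend on $\LL$) completes the proof.

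\emph{Main obstacle.} The real work sits in the first step: propagating the positive-characteristic Deligne--Illusie package --- the decomposition, the Cartier isomorphism, and the splitting produced by $\wt F$ --- from the smooth toric locus to the possibly singular $X$ together with a divisor $D$ that need not be torus invariant. Concretely one must check that push-forward from $U=X\setminus\Sing X$ commutes appropriately with $F_*$, with $(-)\otimes\LL^{(1)}$ and with hypercohomology, so that the displayed identities genuinely hold for the reflexive sheaves $\wt{\Omega}^i_X(\log D)$; one must also verify that the lifted relative Frobenius respects $\wt D$ (so the logarithmic poles on source and target match) and that $F^*$ of an ample sheaf is precisely its $p$-th tensor power under the perfect-field identification, so the descent lands exactly on powers of $\LL$. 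Granting these, the descent of the second step is purely formal.
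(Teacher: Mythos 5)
Your proposal is correct and follows essentially the same route as the paper: lift $(U,D|_U)$ and the relative Frobenius on a codimension-$\ge 2$ smooth open subset on which $D$ is simple normal crossing (the paper's Theorem 3.5 and Corollary 3.6), push the resulting Deligne--Illusie split injection $\phi^i\colon\Omega^i_{U'}(\log D'|_{U'})\to F_*\Omega^i_U(\log D|_U)$ forward to the reflexive sheaves $\wt{\Omega}^i_X(\log D)$, tensor with $\LL'$ and use the projection formula to get $H^j(X,\wt{\Omega}^i_X(\log D)\otimes\LL)\hookrightarrow H^j(X,\wt{\Omega}^i_X(\log D)\otimes\LL^{\otimes p})$, then iterate until Serre vanishing applies. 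The one caveat is your detour through the full decomposition quasi-isomorphism and hypercohomology: $\iota'_*$ of a quasi-isomorphism on $U'$ need not remain a quasi-isomorphism on $X'$, so the displayed hypercohomology identity is not justified --- but it is also unnecessary, since the termwise split injections (which do survive pushforward and tensoring) already yield the needed cohomology injections, exactly as in the paper's Theorem 3.1.
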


\begin{thm}[Hodge to de Rham spectral sequence]\label{1.2}
Let $X$ be a projective toric variety over $k$, and $D$ a reduced Weil
divisor on $X$. Then the Hodge to de Rham spectral sequence degenerates in $E_1$:
\begin{eqnarray*}
E_1^{ij}=H^j(X,\wt{\Omega}^i_X(\log D))\Longrightarrow
\mathbf{H}^{i+j}(X,\wt{\Omega}^\bullet_X(\log D)).
\end{eqnarray*}
\end{thm}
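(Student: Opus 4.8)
The plan is to run the Deligne--Illusie argument for the Zariski--de Rham complex, the key extra ingredient being that the toric hypothesis supplies a \emph{global} lift of the relative Frobenius compatible with the pair $(X,D)$; this removes the usual restriction $\dim X<p$, which is essential here since $\dim X$ is unbounded while $p$ is fixed. First I would reduce to producing a splitting. It suffices to prove that for every $n$
\begin{equation*}
\dim_k\mathbf{H}^n(X,\wt\Omega^\bullet_X(\log D))=\sum_{i+j=n}\dim_k H^j(X,\wt\Omega^i_X(\log D)),
\end{equation*}
since in any spectral sequence $E_\infty^{ij}$ is a subquotient of $E_1^{ij}$, so equality of the two totals forces $d_r=0$ for all $r\ge 1$. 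Let $F\colon X\to X'$ be the relative Frobenius over $k$ and $(X',D')$ the Frobenius twist of $(X,D)$, again a projective toric variety with a reduced Weil divisor. Since $F$ is a finite homeomorphism, $\mathbf{H}^n(X,\wt\Omega^\bullet_X(\log D))=\mathbf{H}^n(X',F_*\wt\Omega^\bullet_X(\log D))$; and since $k$ is perfect, $\dim_k H^j(X',\wt\Omega^i_{X'}(\log D'))=\dim_k H^j(X,\wt\Omega^i_X(\log D))$. So the displayed equality follows once we exhibit an isomorphism in $D(X')$
\begin{equation*}
F_*\wt\Omega^\bullet_X(\log D)\;\simeq\;\bigoplus_{i\ge 0}\wt\Omega^i_{X'}(\log D')[-i].
\end{equation*}

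To build this isomorphism I would work first on the largest open $j\colon U\hookrightarrow X$ on which $X$ is smooth and $D|_U$ is simple normal crossing; since $X$ is normal and $D$ reduced, $\codim_X(X\setminus U)\ge 2$, and by the definition of $\wt\Omega^\bullet_X(\log D)$ one has $\wt\Omega^i_X(\log D)=j_*\Omega^i_U(\log D_U)$. Over $U$ the logarithmic Cartier isomorphism gives $C^{-1}\colon\Omega^i_{U'}(\log D'_U)\xrightarrow{\ \sim\ }\HH^i(F_*\Omega^\bullet_U(\log D_U))$. To promote this to a morphism of complexes realizing the decomposition I would invoke the strong liftability of the toric variety $X$ together with the liftability of its relative Frobenius (in the logarithmic setting, including the non-invariant $D$): this produces a lift of $(X,D)$ over $W_2(k)$ and a lift $\wt F$ of $F$ respecting the lift of the boundary, whence the Deligne--Illusie recipe $\omega\mapsto p^{-1}\wt F^{*}\wt\omega$ defines $\bigoplus_i\Omega^i_{U'}(\log D'_U)[-i]\to F_*\Omega^\bullet_U(\log D_U)$ inducing $C^{-1}$ in each degree, \emph{with no bound on $\dim X$} precisely because the Frobenius lift is global rather than merely local. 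Applying $j'_*$ (with $j'\colon U'\hookrightarrow X'$) and using that each $\wt\Omega^i$ is reflexive and that $\codim_{X'}(X'\setminus U')\ge 2$, one identifies $j'_*$ of the split complex on $U'$ with $\bigoplus_i\wt\Omega^i_{X'}(\log D')[-i]$, which gives the required isomorphism.

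I expect the crux to be exactly the non-torus-invariance of $D$. On the one hand, both the strong liftability of $X$ and the liftability of its relative Frobenius must be arranged \emph{compatibly with $D$} — so that the lift $\wt F$ carries $D'$ to $p$ times a lift of $D$ — which is the substance of the lifting results proved earlier in the paper; on the other hand, one has to check that the Deligne--Illusie splitting, a priori defined only over $U$, genuinely descends along $j'_*$ across the codimension-$\ge 2$ locus where $X$ is singular or $D$ fails to be normal crossing, i.e.\ that taking the $i$-th cohomology sheaf of $F_*\wt\Omega^\bullet_X(\log D)$ commutes with the reflexive extension. Once these two points are secured, Theorem~\ref{1.2} follows from the dimension count above, in parallel with Theorem~\ref{1.1}, which rests on the same lifting input.
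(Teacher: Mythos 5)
Your overall strategy --- a global Frobenius lift for the toric pair, the Deligne--Illusie construction on the smooth locus $U$ where $D$ is simple normal crossing, pushforward across the codimension-$\geq 2$ complement, and a dimension count against the subquotient inequality for spectral sequences --- is the same as the paper's, and your observation that the global Frobenius lift removes the $\dim X<p$ restriction is correct. However, there is a genuine gap at exactly the step you flag as the crux and then leave unresolved: you require an isomorphism in $D(X')$ between $F_*\wt{\Omega}^\bullet_X(\log D)$ and $\bigoplus_i\wt{\Omega}^i_{X'}(\log D')[-i]$, i.e.\ that the Deligne--Illusie quasi-isomorphism on $U'$ remains a quasi-isomorphism after applying $\iota'_*$. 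This is not automatic and you give no argument for it: $\iota'_*$ is only left exact, so it preserves the kernels of the differentials but not their images, and the cohomology sheaves of $\iota'_*F_*\Omega^\bullet_U(\log D|_U)$ need not agree with $\iota'_*$ of the cohomology sheaves computed on $U$. As written, your proof rests on an unverified commutation of $\HH^i$ with the reflexive extension.

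The repair is to ask for less, which is what the paper does in Theorem \ref{3.2}. Proposition \ref{2.5} shows that each $\phi^i$ admits a retraction $\psi^i$ built from the Cartier operator and the wedge pairing, and these assemble into a splitting of $\phi$ as a morphism of complexes (because $C$ kills exact forms, $\psi^{i+1}\circ d=0$). A split injection of complexes survives any additive functor, in particular $\iota'_*$, so $\bigoplus_i\wt{\Omega}^i_{X'}(\log D')[-i]$ is a direct summand of $F_*\wt{\Omega}^\bullet_X(\log D)$ in the category of complexes, whence $\sum_{i+j=n}\dim_k H^j(X,\wt{\Omega}^i_X(\log D))\leq\dim_k\mathbf{H}^n(X,\wt{\Omega}^\bullet_X(\log D))$ without any claim about the quotient. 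The reverse inequality is automatic since $E^{ij}_\infty$ is a subquotient of $E^{ij}_1$, and the two together force $E^{ij}_1\cong E^{ij}_\infty$. With this one modification --- ``split injection of complexes'' in place of ``isomorphism in $D(X')$'', and a one-sided inequality in place of your asserted equality of totals --- your argument closes and coincides with the paper's.
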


\begin{thm}[Kawamata-Viehweg vanishing]\label{1.3}
Let $X$ be a projective simplicial toric variety over $k$, and $H$ a nef and big 
$\Q$-divisor on $X$. Then $H^i(X,K_X+\ulcorner H\urcorner)=0$ holds for any $i>0$.
\end{thm}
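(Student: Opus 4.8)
The plan is to deduce Theorem \ref{1.3} from the Bott vanishing theorem (Theorem \ref{1.1}) together with a covering argument. Write $n=\dim X$. Since $X$ is simplicial it is $\Q$-factorial and Cohen--Macaulay, so $K_X+\ulcorner H\urcorner$ is a well-defined Weil divisor, the dualizing sheaf of $X$ is $\OO_X(K_X)$, and Serre duality holds. Because the smooth locus $U\subseteq X$ has complement of codimension $\geq 2$, restriction to $U$ followed by $j_*$ gives a canonical isomorphism $\widetilde{\Omega}^n_X(\log D)\cong\OO_X(K_X+D)$ for every reduced Weil divisor $D$. Hence Theorem \ref{1.1} with $i=n$ already settles the ``integral'' case: if $\ulcorner H\urcorner\sim D+L$ with $D$ reduced effective and $L$ ample Cartier, then $\OO_X(K_X+\ulcorner H\urcorner)\cong\widetilde{\Omega}^n_X(\log D)\otimes\OO_X(L)$ and $H^i$ vanishes for $i>0$. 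The remaining task is to remove the fractional part $\ulcorner H\urcorner-H$, i.e.\ to reduce an arbitrary ample $\Q$-divisor to this situation.

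For the fractional part I would run the Kawamata--Viehweg covering argument in the spirit of Esnault--Viehweg, adapted to positive characteristic by means of the paper's tools. Let $\Delta=\ulcorner H\urcorner-H$, an effective $\Q$-divisor with coefficients in $[0,1)$ whose support is a reduced, possibly non--torus-invariant, Weil divisor $D_0$; choose $N$ with $NH$ very ample Cartier, take a general $B\in|NH|$ (so $B$ is reduced), set $A=D_0+\Supp B$, and form the degree-$N$ cyclic cover $\pi\colon Y\ra X$ attached to $NH$ and $B$. The decisive input is that $X$ is strongly liftable, so the pair $(X,A)$ lifts to $W_2(k)$ even though $A$ need not be torus invariant; combined with liftability of the relative Frobenius this is exactly what dispenses with any restriction on $\dim X$. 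Decomposing $\pi_*$ of the structure and dualizing sheaves of $Y$ under the natural $\mu_N$-action identifies $H^\bullet\big(X,\OO_X(K_X+\ulcorner H\urcorner)\big)$ with one $\mu_N$-eigen-summand and realizes the others as hypercohomologies of twisted Zariski--de Rham complexes $\widetilde{\Omega}^\bullet_X(\log A)\otimes\OO_X(-\lfloor jH\rfloor)$, $0\leq j<N$. Propagating the $E_1$-degeneration of Theorem \ref{1.2} for $(X,A)$ through this decomposition forces the Hodge to de Rham spectral sequences of all these twisted complexes to degenerate at $E_1$; feeding this into the usual diagram chase, with the positivity supplied by Theorem \ref{1.1} for the top piece and the Serre-type vanishing $H^{>0}(X,\LL)=0$ for ample $\LL$ (the $i=0$ instance of Theorem \ref{1.1}), yields $H^i\big(X,\OO_X(K_X+\ulcorner H\urcorner)\big)=0$ for $0<i<n$. (An alternative, purely toric reduction one might attempt first is to replace $H$ by a torus-invariant ample $\Q$-divisor $H'$ with $\ulcorner H'\urcorner\sim\ulcorner H\urcorner$, using that the cohomology depends only on the linear equivalence class and that the ample cone of torus-invariant $\Q$-divisors is open; whether such $H'$ always exists is itself a nontrivial convexity question.)

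The top degree $i=n$ I would treat separately and elementarily: by Serre duality on the Cohen--Macaulay variety $X$, $H^n\big(X,\OO_X(K_X+\ulcorner H\urcorner)\big)$ is dual to $H^0\big(X,\OO_X(-\ulcorner H\urcorner)\big)$, and a nonzero section there would produce an effective divisor $\sim-\ulcorner H\urcorner\leq-H$, hence, after adding $\ulcorner H\urcorner-H\geq 0$, an effective divisor $\Q$-linearly equivalent to $-H$ --- impossible since $H$ is ample and $\dim X\geq 1$ (the case $\dim X=0$ being trivial). I expect the main obstacle to be the positive-characteristic execution of the covering step: one cannot invoke Hodge theory on $Y$, which is not toric and hence not known to lift together with a resolution of its singularities, so the required degenerations for the complexes $\widetilde{\Omega}^\bullet_X(\log A)\otimes\OO_X(-\lfloor jH\rfloor)$ must be extracted entirely on $X$. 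Concretely one must check that a single lift of $(X,A)$ over $W_2(k)$ simultaneously lifts all the reflexive sheaves $\OO_X(-\lfloor jH\rfloor)$ together with their logarithmic connections, that these twisted Zariski--de Rham complexes fall within the scope of the Frobenius-lifting machinery behind Theorems \ref{1.1} and \ref{1.2}, and that the resulting $E_1$-degenerations assemble into the desired vanishing; a secondary delicacy, present already over $\C$, is the $\mu_N$-bookkeeping that pins down $\OO_X(K_X+\ulcorner H\urcorner)$ as precisely the correct eigen-summand, which is more awkward here because $\ulcorner H\urcorner$ need not be torus invariant.
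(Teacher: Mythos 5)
Your overall strategy --- settling the ``integral'' case by Bott vanishing and then removing the fractional part by a cyclic covering plus degeneration of twisted Hodge to de Rham spectral sequences --- is not the paper's route, and as written it has a genuine gap at its central step. The assertion that the $E_1$-degeneration of Theorem \ref{1.2} for $(X,A)$ ``propagates'' to the twisted complexes $\wt{\Omega}^\bullet_X(\log A)\otimes\OO_X(-[jH])$ is unjustified: degeneration of the untwisted logarithmic complex does not formally imply degeneration of its twists, and in positive characteristic the twisted statement is an independent theorem that requires a Frobenius splitting of the twisted complex. That splitting is exactly what the paper constructs (Hara's Lemma \ref{2.6} and Theorem \ref{2.8}: $\phi^i_H:\Omega^i_{X'}(\log D')(-\ulcorner H'\urcorner)\ra F_*(\Omega^i_X(\log D)(-\ulcorner pH\urcorner))$ is a split injection), and once one has it the cyclic cover is superfluous. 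The cover itself introduces further unaddressed problems in characteristic $p$: you need $p\nmid N$ for the $\mu_N$-eigenspace decomposition; $Y$ is not toric, is not known to lift together with the relevant data, and its singularities are uncontrolled; and identifying the eigen-summands of $\pi_*\OO_Y$ and of the dualizing sheaf with $\OO_X(-[jH])$ and $\OO_X(K_X+\ulcorner jH\urcorner)$ on a singular $X$ for a non-torus-invariant $H$ is delicate. Finally, any argument that passes through $\wt{\Omega}^i_X(\log A)$ for $0<i<n$ on the singular simplicial $X$ collides with the failure of Cohen--Macaulayness of these reflexive sheaves, which the paper explicitly flags after Corollary \ref{3.8} as the obstruction to dualizing them.

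The paper's actual proof is much shorter and avoids all of this. Taking $i=0$ in Lemma \ref{3.3} (whose input is Theorem \ref{2.8} applied on a required open subset $U$ and pushed forward to $X$, using Corollary \ref{3.6} for the liftings) gives, for every $r>0$ and every $j<n$, an injection $H^j(X,\OO_X(-\ulcorner H\urcorner))\hookrightarrow H^j(X,\OO_X(-\ulcorner p^rH\urcorner))$. Writing $p^r=sm+t$ with $mH$ Cartier and $0\le t<m$, one has $\ulcorner p^rH\urcorner=s(mH)+\ulcorner tH\urcorner$, and the target vanishes for $r\gg 0$ by Serre vanishing combined with toric Serre duality for Weil divisors (\cite[Theorem 9.2.10(a) and Remark 9.2.11(b)]{clh}); a final application of that duality converts $H^j(X,\OO_X(-\ulcorner H\urcorner))=0$ for $j<n$ into $H^j(X,K_X+\ulcorner H\urcorner)=0$ for $j>0$. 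Your top-degree Serre-duality argument is correct, but it is the only part of the proposal that survives unchanged; the core of the theorem is the Frobenius amplification of $H$ to $p^rH$ applied to the single sheaf $\OO_X(-\ulcorner H\urcorner)$, not a covering construction, and filling the gap in your covering step would in effect force you to reprove Theorem \ref{2.8} and then discard the cover.
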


All of the results are fresh for toric varieties in positive characteristic. 
Toric varieties are elementary, however, we can construct some interesting classes 
of algebraic varieties in positive characteristic from toric varieties. 
It turns out that the celebrated Kawamata-Viehweg vanishing theorem \cite{ka,vi} 
plays an essential role in birational geometry of complex algebraic varieties. 
Hence Theorem \ref{1.3} is also expected to be helpful in the study of 
algebraic varieties in positive characteristic.

Furthermore, we prove more general results as Theorems \ref{3.1},
\ref{3.2}, and \ref{3.4} for a normal projective variety
whose relative Frobenius morphism has a global lifting and which is
strongly liftable over $W_2(k)$. We also give an example as Corollary
\ref{3.7} to show that the main theorems could hold for more general
varieties.

In \S \ref{S2}, we will recall some definitions and preliminary results.
\S \ref{S3} is devoted to the proofs of the main theorems.
For the necessary notions and results in toric geometry,
we refer the reader to \cite{da}, \cite{od}, \cite{fu93} and \cite{clh}.

\begin{nota}
We use $[B]=\sum [b_i] B_i$ (resp.\ $\ulcorner B\urcorner=\sum \ulcorner
b_i\urcorner B_i$, $\langle B\rangle=\sum \langle b_i\rangle B_i$)
to denote the round-down (resp.\ round-up, fractional part)
of a $\Q$-divisor $B=\sum b_iB_i$, where for a real number $b$,
$[b]:=\max\{ n\in\Z \,|\,n\leq b \}$, $\ulcorner b\urcorner:=-[-b]$
and $\langle b\rangle:=b-[b]$.
\end{nota}

\begin{ack}
I would like to express my gratitude to Professors Luc Illusie and Osamu Fujino
for useful comments. I am very grateful to the referee for giving many useful 
suggestions, which make this paper more readable.
\end{ack}

\section{Preliminaries}\label{S2}

\begin{defn}\label{2.1}
The ring of Witt vectors of length two of $k$, denoted by $W_2(k)$,
is $k\oplus k$ as set, where addition and multiplication for
$a=(a_0,a_1)$ and $b=(b_0,b_1)$ in $W_2(k)$ are defined by
\begin{eqnarray*}
a+b &=& (a_0+b_0,a_1+b_1-\frac{1}{p}\sum_{0<i<p}{p\choose i}a_0^i
b_0^{p-i}), \\
a\cdot b &=& (a_0b_0,a_0^pb_1+b_0^pa_1).
\end{eqnarray*}
We can prove that $W_2(k)$ is flat over $\Z/p^2\Z$, $W_2(k)\otimes_
{\Z/p^2\Z}\Z/p\Z=k$ and $W_2(k)=\Z/p^2\Z$ if $k=\Z/p\Z$. In fact,
the projection $pr_1:W_2(k)\ra k$ given by $pr_1(a_0,a_1)=a_0$
corresponds to the reduction $W_2(k)/pW_2(k)=k$ modulo $p$, and the
ring homomorphism $F_{W_2(k)}:W_2(k)\ra W_2(k)$ given by $F_{W_2(k)}
(a_0,a_1)=(a_0^p,a_1^p)$ reduces to the Frobenius homomorphism
$F_k:k\ra k$ modulo $p$. We refer the reader to \cite[II.6]{se}
for more details.
\end{defn}

The following definition \cite[Definition 8.11]{ev} generalizes the
definition \cite[1.6]{di} of liftings of $k$-schemes over $W_2(k)$.

\begin{defn}\label{2.2}
Let $X$ be a noetherian scheme over $k$, and $D=\sum_i D_i$ a reduced
Cartier divisor on $X$. A lifting of $(X,D)$ over $W_2(k)$ consists of
a scheme $\wt{X}$ and closed subschemes $\wt{D}_i\subset\wt{X}$, all
defined and flat over $W_2(k)$ such that $X=\wt{X}\times_{\spec W_2(k)}
\spec k$ and $D_i=\wt{D}_i\times_{\spec W_2(k)}\spec k$. We write
$\wt{D}=\sum_i \wt{D}_i$ and say that $(\wt{X},\wt{D})$ is a lifting
of $(X,D)$ over $W_2(k)$, if no confusion is likely.
\end{defn}

Let $\wt{X}$ be a lifting of $X$ over $W_2(k)$. Then we have
an exact sequence of $\OO_{\wt{X}}$-modules:
\begin{eqnarray}
0\ra p\cdot\OO_{\wt{X}}\ra \OO_{\wt{X}}\stackrel{r}{\ra}
\OO_X\ra 0, \label{es1}
\end{eqnarray}
together with an $\OO_{\wt{X}}$-module isomorphism
\begin{eqnarray}
p:\OO_X\ra p\cdot\OO_{\wt{X}}, \label{es2}
\end{eqnarray}
where $r$ is the reduction modulo $p$ satisfying $p(x)=p\wt{x}$,
$r(\wt{x})=x$ for $x\in\OO_X$, $\wt{x}\in\OO_{\wt{X}}$.

Assume that $X$ is smooth over $k$ and $D=\sum_i D_i$ is simple normal
crossing. If $(\wt{X},\wt{D})$ is a lifting of $(X,D)$ over $W_2(k)$,
then $\wt{X}$ is smooth over $W_2(k)$ and $\wt{D}=\sum_i \wt{D}_i$ is
relatively simple normal crossing over $W_2(k)$, i.e.\ $\wt{X}$ is
covered by affine open subsets $\{U_\alpha\}$, such that each $U_\alpha$
is \'{e}tale over $\A^n_{W_2(k)}$ via coordinates $\{ x_1,\cdots,x_n \}$
and $\wt{D}|_{U_\alpha}$ is defined by the equation $x_1\cdots x_\nu=0$
with $1\leq\nu\leq n$ (see \cite[Lemmas 8.13, 8.14]{ev}). Therefore
we have an exact sequence of $\OO_{\wt{X}}$-modules:
\begin{eqnarray*}
0\ra p\cdot\Omega^1_{\wt{X}/W_2(k)}(\log \wt{D})\ra \Omega^1_{\wt{X}
/W_2(k)}(\log \wt{D})\stackrel{r}{\ra} \Omega^1_X(\log D)\ra 0,
\end{eqnarray*}
together with an $\OO_{\wt{X}}$-module isomorphism
$p:\Omega^1_X(\log D)\ra p\cdot\Omega^1_{\wt{X}/W_2(k)}(\log \wt{D})$.

\begin{defn}\label{2.3}
Let $X$ be a noetherian scheme over $k$, $F_X:X\ra X$ the absolute
Frobenius morphism of $X$, and $X'=X\times_{\spec k} F_k$ the fiber
product. Then we have the following commutative diagram with cartesian
square:
\[
\xymatrix{
X \ar[dr] \ar[r]_{F} \ar@/^1pc/[rr]^{F_X} & X' \ar[d] \ar[r] & X \ar[d] \\
 & \spec k \ar[r]^{F_k} & \spec k.
}
\]
The induced $k$-morphism $F:X\ra X'$ is called the relative Frobenius
morphism of $X/k$. Assume that $X$ has a lifting $\wt{X}$ over $W_2(k)$.
Define $\wt{X}'=\wt{X}\times_{\spec W_2(k)}F_{W_2(k)}$. A $W_2(k)$-morphism
$\wt{F}:\wt{X}\ra \wt{X}'$ is called a lifting of the relative Frobenius
morphism of $X/k$, if the restriction of $\wt{F}$ to $X$ is just $F:X\ra X'$.
\[
\xymatrix{
\wt{X} \ar[dr] \ar[r]^{\wt{F}} & \wt{X}' \ar[d] \ar[r] & \wt{X} \ar[d] \\
 & \spec W_2(k) \ar[r]^{F_{W_2(k)}} & \spec W_2(k).
}
\]

Let $D$ be a reduced Cartier divisor on $X$, $\wt{D}$ a lifting of $D$ over
$W_2(k)$ and $\wt{D}'=\wt{D}\times_{\spec W_2(k)}F_{W_2(k)}$ the divisor on
$\wt{X}'$. $\wt{F}:\wt{X}\ra \wt{X}'$ is said to be compatible with $\wt{D}$,
if $\wt{F}^*\OO_{\wt{X}'}(-\wt{D}')=\OO_{\wt{X}}(-p\wt{D})$ holds.
\end{defn}

The following theorem is essentially due to Deligne and Illusie, although its 
explicit statement and proof have not been given in \cite{di}.

\begin{thm}\label{2.4}
Let $X$ be a smooth scheme over $k$, and $D=\sum_i D_i$ a simple normal
crossing divisor on $X$. Assume that $(X,D)$ has a lifting $(\wt{X},\wt{D})$
over $W_2(k)$ and the relative Frobenius morphism $F:X\ra X'$ has a lifting
$\wt{F}:\wt{X}\ra \wt{X}'$, which is compatible with $\wt{D}$. Then there is
a quasi-isomorphism of complexes of $\OO_{X'}$-modules:
\begin{eqnarray*}
\phi:\bigoplus_i\Omega^i_{X'}(\log D')[-i]\ra F_*\Omega^\bullet_X(\log D).
\end{eqnarray*}
\end{thm}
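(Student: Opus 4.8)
The plan is to adapt the classical Deligne–Illusie construction (for the trivial divisor) to the logarithmic setting, keeping track of the compatibility condition $\wt{F}^*\OO_{\wt{X}'}(-\wt{D}')=\OO_{\wt{X}}(-p\wt{D})$. First I would define the map $\phi$ on cohomology sheaves. On $\OO$-level, $F_*\OO_X$ sits in the bottom of $F_*\Omega^\bullet_X(\log D)$; since $d$ kills $p$-th powers, the absolute Frobenius gives an $\OO_{X'}$-linear map $\OO_{X'}\to F_*\OO_X$ which identifies with $\phi$ in degree $0$. For degree $1$, the key point is to produce an $\OO_{X'}$-linear map $\Omega^1_{X'}(\log D')\to \HH^1\bigl(F_*\Omega^\bullet_X(\log D)\bigr)$. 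This is where the lifting $\wt{F}$ enters: writing things in local coordinates $x_1,\dots,x_n$ on an affine open $U_\alpha$ \'{e}tale over $\A^n_{W_2(k)}$ with $\wt{D}$ given by $x_1\cdots x_\nu=0$, the lift $\wt{F}$ satisfies $\wt{F}^*(x_i')=x_i^p+p\,u_i$ for $1\le i\le\nu$ after the compatibility forces $\wt{F}^*(x_i') = x_i^p v_i$ with $v_i$ a unit congruent to $1$ mod $p$, hence $v_i = 1 + p w_i$; for $\nu<i\le n$ one just has $\wt{F}^*(x_i')=x_i^p+p\,u_i$ with $u_i\in\OO_X$. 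Then the assignment
\begin{eqnarray*}
\frac{dx_i'}{x_i'}\longmapsto \frac{1}{p}\,\wt{F}^*\!\left(\frac{dx_i'}{x_i'}\right)=\frac{dw_i}{1+pw_i}=dw_i \bmod p \quad (1\le i\le\nu),
\end{eqnarray*}
and $dx_i'\mapsto \frac{1}{p}d\bigl(\wt{F}^*x_i'\bigr)= du_i \bmod p$ for $\nu<i\le n$, lands in closed logarithmic $1$-forms on $X$ and defines the degree-one component of $\phi$; one then checks this is independent of the local coordinates and of the chosen local lift, so it glues to a global map. Extending multiplicatively via $\phi^i = \wedge^i\phi^1$ (using that the target is a sheaf of graded-commutative dg-algebras in the derived sense) gives $\phi$ in all degrees.

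Next I would verify that $\phi$ is a quasi-isomorphism. By the Cartier isomorphism for logarithmic de Rham complexes in characteristic $p$ — which holds for any smooth $X$ with SNC divisor $D$, independently of any lifting — one has a canonical $\OO_{X'}$-linear isomorphism $C^{-1}:\Omega^i_{X'}(\log D')\xrightarrow{\ \sim\ }\HH^i\bigl(F_*\Omega^\bullet_X(\log D)\bigr)$. The content of the theorem is that the explicit $\phi$ built from $\wt{F}$ induces, on cohomology sheaves, precisely $C^{-1}$ (or at least an isomorphism): in degree $0$ this is immediate, in degree $1$ it is a direct local computation comparing the formula above with the definition of the inverse Cartier operator $\frac{dx_i'}{x_i'}\mapsto [\frac{dx_i}{x_i}]$, and in higher degrees it follows since both $C^{-1}$ and $\phi^\bullet$ are the multiplicative extensions of their degree-one parts. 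Since each $\HH^i(\phi)$ is an isomorphism and the source complex $\bigoplus_i\Omega^i_{X'}(\log D')[-i]$ has zero differential, $\phi$ is a quasi-isomorphism.

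The main obstacle I expect is bookkeeping at the divisor: one must ensure that the compatibility hypothesis $\wt{F}^*\OO_{\wt{X}'}(-\wt{D}')=\OO_{\wt{X}}(-p\wt{D})$ is exactly what is needed for $\frac1p\wt{F}^*$ to send $\frac{dx_i'}{x_i'}$ to a \emph{logarithmic} (rather than merely meromorphic) closed form along $D$ — without it, $\wt{F}^*(x_i')$ need not be divisible in the right way and the division by $p$ would produce poles. A secondary subtlety is the globalization: the local maps $\phi_\alpha^1$ depend on the choice of local lift of Frobenius, and on overlaps $U_\alpha\cap U_\beta$ the two differ by an $\OO$-linear map into \emph{exact} logarithmic forms (a coboundary), so they agree on cohomology and the induced map on $\HH^1$ is canonical; packaging this into an actual map of complexes (not just on cohomology) in the derived category requires the usual Čech/hypercohomology argument, exactly as in \cite{di}. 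Once degree one is under control, the passage to all degrees and the comparison with the Cartier isomorphism are formal.
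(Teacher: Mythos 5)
Your overall strategy is exactly the paper's: define $\phi^1=p^{-1}\wt{F}^*$ on logarithmic one-forms, check that it lands in closed forms and induces the inverse Cartier operator on $\HH^1$, extend multiplicatively by wedge products, and conclude that $\phi$ is a quasi-isomorphism. However, the displayed degree-one formula is wrong, and wrong in a way that would destroy the argument if taken literally. Writing $\wt{F}^*(x_i')=x_i^p(1+pw_i)$, one computes
\[
\frac{1}{p}\,\wt{F}^*\Bigl(\frac{dx_i'}{x_i'}\Bigr)
=\frac{dx_i}{x_i}+\frac{dw_i}{1+pw_i}
\equiv\frac{dx_i}{x_i}+dw_i \pmod p,
\]
not $dw_i$: you dropped the leading term $dx_i/x_i$. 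That term is the entire content of the assertion $\HH^1(\phi)=C^{-1}$; with only the exact form $dw_i$ the induced map on $\HH^1$ would be zero and $\phi$ would fail to be a quasi-isomorphism. Your later sentence, which claims the formula agrees with the inverse Cartier operator $dx_i'/x_i'\mapsto[dx_i/x_i]$, is inconsistent with the formula you actually wrote, so the verification step does not go through as stated. The fix is simply to redo the computation; this is formula (\ref{es3}) in the paper, $f(dx\otimes 1/x\otimes 1)=dx/x+du(\wt{x})$.

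A second, less serious point: your ``secondary subtlety'' about gluing local lifts of Frobenius via a \v{C}ech argument is not needed here, and pursuing it would deliver less than the theorem asserts. The hypothesis provides a single global lifting $\wt{F}:\wt{X}\ra\wt{X}'$, so $f=p^{-1}\wt{F}^*$ is globally defined from the outset (the image of $\wt{F}^*$ lies in $p\cdot\wt{F}_*\Omega^1_{\wt{X}/W_2(k)}(\log\wt{D})$ because $F^*$ vanishes on one-forms), and one obtains an honest morphism of complexes, not merely a morphism in the derived category. The \v{C}ech/hypercohomology machinery of Deligne--Illusie is only required when no global lift exists. Once the degree-one formula is corrected, the rest of your outline (multiplicative extension, comparison with the Cartier isomorphism, conclusion) matches the paper's proof.
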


\begin{proof}
We use a similar proof to that of \cite[Th\'eor\`eme 2.1]{di}. Since
$F^*:\Omega^1_{X'}(\log D')\ra F_*\Omega^1_X(\log D)$ is trivial, the image of
$\wt{F}^*:\Omega^1_{\wt{X}'/W_2(k)}(\log \wt{D}')\ra \wt{F}_*\Omega^1_{\wt{X}/
W_2(k)}(\log \wt{D})$ is contained in $p\cdot\wt{F}_*\Omega^1_{\wt{X}/W_2(k)}
(\log \wt{D})$. Therefore, there exists a unique homomorphism $f=p^{-1}\wt{F}^*:
\Omega^1_{X'}(\log D')\ra F_*\Omega^1_X(\log D)$ making the following diagram
commutative:
\[
\xymatrix{
\Omega^1_{\wt{X}'/W_2(k)}(\log \wt{D}') \ar@{->>}[d]_r \ar[r]^{\wt{F}^*} &
p\cdot\wt{F}_*\Omega^1_{\wt{X}/W_2(k)}(\log \wt{D}) \\
\Omega^1_{X'}(\log D') \ar[r]^{f} & F_*\Omega^1_X(\log D) \ar[u]^{\cong}_p.
}
\]
Let $\wt{x}$ be a local section of $\OO_{\wt{X}}(-\wt{D})$ and $x=r(\wt{x})$
the induced local section of $\OO_X(-D)$. Since $\wt{F}$ is compatible with
$\wt{D}$, we have $\wt{F}^*(\wt{x}\otimes 1)=\wt{x}^p(1+pu(\wt{x}))$, where
$u(\wt{x})$ is a local section of $\OO_{\wt{X}}$. Hence we have
\begin{eqnarray}
f(\frac{dx\otimes 1}{x\otimes 1})=\frac{dx}{x}+du(\wt{x}). \label{es3}
\end{eqnarray}
In particular, we have $df=0$, hence $f$ induces a homomorphism $f:\Omega^1
_{X'}(\log D')\ra \ZZ^1(F_*\Omega^\bullet_X(\log D))$. Define $\phi^0:\OO_{X'}
\ra F_*\OO_X$ to be the Frobenius homomorphism. For any $i\geq 1$, define
$\phi^i=\wedge^i f:\Omega^i_{X'}(\log D')\ra \ZZ^i(F_*\Omega^\bullet_X
(\log D))$ to be the $i$-th wedge product of $f$. By abuse of notation,
we denote $\phi^i:\Omega^i_{X'}(\log D')\ra F_*\Omega^i_X(\log D)$ to be
the composition of $\wedge^i f$ with the natural inclusion
$\ZZ^i(F_*\Omega^\bullet_X(\log D))\hookrightarrow F_*\Omega^i_X(\log D)$.
Thus we have a morphism of complexes of $\OO_{X'}$-modules:
\begin{eqnarray*}
\phi=\bigoplus_i\phi^i:\bigoplus_i\Omega^i_{X'}(\log D')[-i]\ra
F_*\Omega^\bullet_X(\log D).
\end{eqnarray*}
It follows from (\ref{es3}) and the definition of $\phi^i$ that
$\HH^i(\phi)=C^{-1}$ holds for any $i\geq 0$, where $C:\HH^i(F_*\Omega
^\bullet_X(\log D))\ra \Omega^i_{X'}(\log D')$ is the Cartier isomorphism
(cf.\ \cite[7.2]{katz}). Thus $\phi$ is a quasi-isomorphism of complexes
of $\OO_{X'}$-modules.
\end{proof}

Deligne and Illusie \cite{di} proved that if $(X,D)$ has a lifting over $W_2(k)$ 
and $p=\ch(k)>\dim X$, then the de Rham complex $F_*\Omega^\bullet_X(\log D)$ is 
decomposable in derived category, namely, the isomorphism $\phi$ described in
Theorem \ref{2.4} exists in the derived category. On the other hand, Theorem \ref{2.4} 
proves that if we assume the liftability of the relative Frobenius morphism 
instead of the condition $p>\dim X$, then the map $\phi$ described in Theorem \ref{2.4} 
exists in the category of complexes, which illustrates that the lifting condition of 
the relative Frobenius morphism is indeed very strong.

Next, we can generalize \cite[Theorem 2]{btlm} to the logarithmic case.

\begin{prop}\label{2.5}
Notation and assumptions as in Theorem \ref{2.4}, then the homomorphism
$\phi^i:\Omega^i_{X'}(\log D')\ra F_*\Omega^i_X(\log D)$ is a split
injection for any $i\geq 0$.
\end{prop}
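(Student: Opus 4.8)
The plan is to follow the strategy of \cite[Theorem 2]{btlm}, which treats the case $D=0$, and adapt it to the logarithmic setting. The key point is that the quasi-isomorphism $\phi$ of Theorem \ref{2.4} is built from the explicit map $f=p^{-1}\wt{F}^*$, and one should look for an explicit left inverse to $\phi^i$ coming from a trace-type construction on the relative Frobenius. Concretely, I would first establish the claim for $i=0$, where $\phi^0:\OO_{X'}\ra F_*\OO_X$ is the Frobenius homomorphism; here the splitting is exactly the statement that $X$ is Frobenius split in a suitable relative sense, and for toric varieties (or, more generally, under the hypotheses propagated from Theorem \ref{2.4}) one has a canonical splitting of $\OO_{X'}\to F_*\OO_X$ as $\OO_{X'}$-modules, which is the content of the Buch--Thomsen--Lauritzen--Mehta argument.

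Next I would handle general $i$ by exhibiting a retraction $\psi^i:F_*\Omega^i_X(\log D)\ra \Omega^i_{X'}(\log D')$ with $\psi^i\circ\phi^i=\mathrm{id}$. The natural candidate is the composite of the Cartier operator-type projection $F_*\Omega^i_X(\log D)\twoheadrightarrow \ZZ^i(F_*\Omega^\bullet_X(\log D))$ — wait, this does not exist in general, so instead I would use the splitting $\sigma:F_*\OO_X\to\OO_{X'}$ from the $i=0$ case together with the de Rham differential: the point is that the subsheaf $F^*\Omega^i_{X'}(\log D')\subset \Omega^i_X(\log D)$ is a direct summand because it is a locally free $\OO_X$-module and the quotient is locally free (this follows from the local structure of $F$ for log-smooth pairs, cf.\ the Cartier isomorphism filtration of Deligne--Illusie). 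More precisely, I would use that $F_*\Omega^i_X(\log D)$ carries the canonical filtration with graded pieces computed by Cartier, and that $\phi^i$ splits the bottom piece; combining this with the $\OO_{X'}$-linear splitting $\sigma$ of $\phi^0$ extended by the Leibniz rule gives the desired $\psi^i$.

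In carrying this out, the main obstacle is that, unlike in the $D=0$ case, the logarithmic differentials do not form a locally free sheaf in a way that is obviously compatible with the Frobenius pushforward when $X$ is merely a toric variety and $D$ is an arbitrary torus-invariant (or even non-invariant) boundary; one must check that the local computations near a crossing point — where $\wt{F}^*x_j = x_j^p(1+p\,u_j)$ for the boundary coordinates — still produce an $\OO_{X'}$-linear splitting and not merely an additive one. I expect the cleanest route is to verify $\OO_{X'}$-linearity of the candidate retraction directly from the formula \eqref{es3}: since $f(dx/x) = dx/x + du(\wt x)$ and the correction term $du(\wt x)$ lies in the image of $d:\OO_X\to\Omega^1_X$, the retraction should be "take the $\OO_{X'}$-component after applying $\sigma$ to coefficients and killing exact logarithmic forms," and one checks on monomials $\frac{dx_{j_1}}{x_{j_1}}\wedge\cdots\wedge\frac{dx_{j_i}}{x_{j_i}}\wedge dx_{k_1}\wedge\cdots$ that this is well defined and $\OO_{X'}$-linear. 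Once the splitting is constructed locally and shown to be canonical (independent of the étale coordinates), it glues to a global $\OO_{X'}$-module map, giving the split injectivity of $\phi^i$ for all $i\geq 0$.
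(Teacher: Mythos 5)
Your overall plan---produce an explicit $\OO_{X'}$-linear retraction $\psi^i$ of $\phi^i$---is the right one, and the $i=0$ step (the canonical splitting of $\OO_{X'}\ra F_*\OO_X$ furnished by the Cartier operator, as in \cite{btlm}) is sound. The gap is in the passage to $i>0$. First, the claim that $F^*\Omega^i_{X'}(\log D')\subset\Omega^i_X(\log D)$ is a direct summand ``because the quotient is locally free'' is false: already for $X=\A^1_k$, $D=0$ and the standard lifting of Frobenius, the $\OO_X$-linear adjoint of $\phi^1$ sends the generator $dx\otimes 1$ to $x^{p-1}dx$, so its cokernel is the torsion module $\OO_X/(x^{p-1})$. (Note that $\phi^1$ itself is still split here, but only as a map of $\OO_{X'}$-modules --- its image $\OO_{X'}\cdot x^{p-1}dx$ is a free summand of $F_*\Omega^1_X=\bigoplus_{a=0}^{p-1}\OO_{X'}x^a\,dx$ --- so you are conflating the two module structures.) Moreover, even where relevant quotients are locally free, that only yields \emph{local} splittings, and the coherence of these under change of \'etale coordinates and of the local form of $\wt{F}$ is exactly what you defer to ``canonicity'' without proof. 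Likewise, ``killing exact logarithmic forms'' presupposes a canonical projection of $F_*\Omega^i_X(\log D)$ onto $\ZZ^i(F_*\Omega^\bullet_X(\log D))$, which, as you yourself note, does not exist; your monomial-by-monomial recipe depends on the coordinates and on the correction terms $du(\wt{x})$ in (\ref{es3}), and its well-definedness and $\OO_{X'}$-linearity are never verified.

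The missing ingredient is the top-degree duality trick, which is in fact the core of \cite[Theorem 2]{btlm} that you cite as your model but do not use: in degree $n=\dim X$ every form is closed, so $\ZZ^n=F_*\Omega^n_X(\log D)$ and the Cartier operator $C$ is defined on the \emph{whole} sheaf $F_*\Omega^n_X(\log D)$ with values in $\Omega^n_{X'}(\log D')$. One then sets $\psi^i(\omega)(\eta)=C(\phi^{n-i}(\eta)\wedge\omega)$ for $\eta$ a local section of $\Omega^{n-i}_{X'}(\log D')$; since $\phi^{n-i}\wedge\phi^i=\phi^n$ and $C\phi^n=\mathrm{Id}$, one gets $\psi^i(\phi^i(z))(\eta)=\eta\wedge z$, and the perfect wedge pairing $\Omega^i_{X'}(\log D')\otimes\Omega^{n-i}_{X'}(\log D')\ra\Omega^n_{X'}(\log D')$ converts $\psi^i$ into an $\OO_{X'}$-linear map $F_*\Omega^i_X(\log D)\ra\Omega^i_{X'}(\log D')$ with $\psi^i\phi^i=\mathrm{Id}$. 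This construction is global and coordinate-free, so no gluing argument is needed.
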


\begin{proof}
Denote $n=\dim X$, $\BB^i=\BB^i(F_*\Omega^\bullet_X(\log D))$, $\ZZ^i=\ZZ^i
(F_*\Omega^\bullet_X(\log D))$. By using the Cartier isomorphism
\cite[7.2]{katz}, we have the following exact sequence for any $i\geq 0$:
\begin{eqnarray}
0\ra \BB^i\ra \ZZ^i\stackrel{C}{\ra} \Omega^i_{X'}(\log D')\ra 0. \label{es4}
\end{eqnarray}
In fact, the exact sequence (\ref{es4}) splits since $C\phi^i=\mathrm{Id}$
holds for any $i\geq 0$.

Define a homomorphism $\psi^i:F_*\Omega^i_X(\log D)\ra \HOM_{\OO_{X'}}
(\Omega^{n-i}_{X'}(\log D'),\Omega^n_{X'}(\log D'))$ by $\omega\mapsto\psi^i
(\omega)$, where $\psi^i(\omega)(\eta)=C(\phi^{n-i}(\eta)\wedge\omega)$ and
$C:F_*\Omega^n_X(\log D)=\ZZ^n\ra \Omega^n_{X'}(\log D')$ is the homomorphism
in the exact sequence (\ref{es4}) for $i=n$. Let $z$ be a local section of
$\Omega^i_{X'}(\log D')$. Then we have $\psi^i(\phi^i(z))(\eta)=C(\phi^{n-i}
(\eta)\wedge\phi^i(z))=C(\phi^n(\eta\wedge z))=\eta\wedge z$. By using the
perfect pairing between $\Omega^{n-i}_{X'}(\log D')$ and $\Omega^i_{X'}
(\log D')$ given by the wedge product, we obtain a well-defined homomorphism
$\psi^i:F_*\Omega^i_X(\log D)\ra \Omega^i_{X'}(\log D')$ satisfying
$\psi^i\phi^i=\mathrm{Id}$, which is the desired splitting of $\phi^i$.
\end{proof}

In order to deal with $\Q$-divisors, we need the following lemma due to Hara \cite{ha}.

\begin{lem}\label{2.6}
Let $X$ be a smooth scheme over $k$, $D=\sum_i D_i$ a simple normal crossing
divisor on $X$, and $G=\sum_ir_iD_i$ an effective divisor on $X$ with
$0\leq r_i<p$ for any $i$. Then the inclusion of complexes $\Omega^\bullet
_X(\log D)\hookrightarrow\Omega^\bullet_X(\log D)(G):=\Omega^\bullet_X
(\log D)\otimes\OO_X(G)$ induces a quasi-isomorphism of complexes of
$\OO_{X'}$-modules:
\begin{eqnarray*}
\nu:F_*\Omega^\bullet_X(\log D)\hookrightarrow F_*(\Omega^\bullet_X
(\log D)(G)).
\end{eqnarray*}
\end{lem}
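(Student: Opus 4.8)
The plan is to reduce the statement to a local computation, since quasi-isomorphism is a local property on $X'$ and both $F_*\Omega^\bullet_X(\log D)$ and $F_*(\Omega^\bullet_X(\log D)(G))$ are complexes of quasi-coherent $\OO_{X'}$-modules. First I would choose an affine open $U\subset X$ that is \'etale over $\A^n_k$ via coordinates $x_1,\dots,x_n$ in which $D|_U$ is cut out by $x_1\cdots x_\nu=0$ and $G|_U=\sum_{i=1}^\nu r_i\{x_i=0\}$; by base change we may as well assume $X=\A^n_k$ and everything is the standard toric picture. The key observation is that the $p$-th power map makes $F_*\OO_X$ a free $\OO_{X'}$-module with basis the monomials $x^{\underline{a}}=x_1^{a_1}\cdots x_n^{a_n}$ for $0\le a_i<p$, and both logarithmic de Rham complexes decompose as direct sums indexed by the residue classes of exponents modulo $p$. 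Concretely, writing $\Omega^\bullet_X(\log D)(G)$ with its natural $\Z^n$-grading coming from the torus action and pushing forward by $F$, one gets $F_*(\Omega^\bullet_X(\log D)(G))=\bigoplus_{\underline{a}}\KK^\bullet_{\underline{a}}$ where $\KK^\bullet_{\underline{a}}$ is a Koszul-type complex on $\OO_{X'}$ whose shape depends only on which coordinates $i$ satisfy $a_i\equiv 0\pmod p$ and (for $i\le\nu$) on the twist $r_i$; the inclusion $\nu$ respects this decomposition.

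Next I would carry out the summand-by-summand analysis. On the summand indexed by $\underline{a}$, the differential in direction $i$ is (up to a unit) multiplication by the integer $a_i$ if $x_i$ does not divide $D$, and is the logarithmic differential — which on the relevant graded piece acts as multiplication by $a_i$ again, now reading $a_i$ in the appropriate shifted range because of the twist by $r_i$ — if $i\le\nu$. The point is that twisting $\Omega^\bullet_X(\log D)$ by $\OO_X(G)$ with $0\le r_i<p$ has the effect of shifting, within each mod-$p$ class, the integer by which the logarithmic differential multiplies; the hypothesis $r_i<p$ guarantees that this integer is a unit in $k$ precisely when it is nonzero, exactly as in the untwisted case. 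So for each $\underline{a}$ the complex $\KK^\bullet_{\underline{a}}$ is a tensor product of length-one complexes, each of which is either acyclic (when the relevant multiplier is a nonzero element of $k$, hence invertible) or has its cohomology concentrated in a single degree and equal to a copy of $\OO_{X'}$ (when the multiplier vanishes). Comparing these computations for $G$ and for $G=0$ — or, more efficiently, quoting the Cartier isomorphism $\HH^i(F_*\Omega^\bullet_X(\log D))\cong\Omega^i_{X'}(\log D')$ of \cite[7.2]{katz} and showing that $\nu$ induces an isomorphism on the analogous presentation of $\HH^i(F_*(\Omega^\bullet_X(\log D)(G)))$ — shows that $\nu$ is a quasi-isomorphism.

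The main obstacle I anticipate is bookkeeping rather than any conceptual difficulty: one must track carefully how the twist by $G$ interacts with the mod-$p$ decomposition and the logarithmic poles, and verify that on each graded summand the twisted complex and the untwisted one have the \emph{same} cohomology (so that $\nu$, which is injective on the nose, is forced to be a quasi-isomorphism by comparing ranks degree by degree). Since the statement is attributed to Hara \cite{ha}, I would in practice reduce to the local \'etale-toric model as above and then either cite or reproduce that local computation, taking care that the Cartier isomorphism is compatible with the inclusion $\nu$ so that the global conclusion follows immediately from the local one.
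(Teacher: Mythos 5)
Your proposal is correct and follows essentially the same route as the paper: the paper reduces to the one-dimensional local model via K\"unneth and then computes the rank-$p$ free $\OO_{X'}$-module complex with basis $t^i/t^r$, observing that the differential multiplies by $i-r$, which is a unit in $k$ unless $i=r$ because $0\leq r<p$ --- exactly the monomial/tensor-product decomposition and unit-multiplier argument you describe in $n$ variables. The K\"unneth reduction is just the packaged form of your summand-by-summand Koszul analysis, so the two arguments coincide.
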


\begin{proof}
By K\"unneth's formula, we can reduce to prove the case where $\dim X=1$,
$D$ is defined by the local parameter $t$ and $G=rD$ with $0\leq r<p$.
In this case, the complex of $\OO_{X'}$-modules
$F_*(\Omega^\bullet_X(\log D)(G))$ is written as:
\[
0\ra \bigoplus_{i=0}^{p-1}\OO_{X'}\cdot t^i\cdot\frac{1}{t^r}
\stackrel{d}{\ra}\bigoplus_{i=0}^{p-1}\OO_{X'}\cdot \frac{dt}{t}
\cdot t^i\cdot\frac{1}{t^r}\ra 0.
\]
By an easy calculation, we have $\HH^0=\OO_{X'}$ and
$\HH^1=\OO_{X'}\cdot dt/t$, which is independent of $r$.
Hence $\nu$ is a quasi-isomorphism.
\end{proof}

With the same notation and assumptions as in Theorem \ref{2.4} and
Lemma \ref{2.6}, define $\phi^i_G$ to be the composition of the
following homomorphisms for any $i\geq 0$:
\begin{eqnarray*}
\phi^i_G:\Omega^i_{X'}(\log D')\stackrel{\phi^i}{\ra} \ZZ^i(F_*\Omega^\bullet
_X(\log D))\stackrel{\nu}{\ra} \ZZ^i(F_*(\Omega^\bullet_X(\log D)(G)))
\hookrightarrow F_*(\Omega^i_X(\log D)(G)).
\end{eqnarray*}
Since $\phi^i\wedge\phi^j=\phi^{i+j}$ holds for any $i,j\geq 0$, by using
the natural homomorphism $\wedge:\Omega^i_X(\log D)\otimes\Omega^j_X(\log D)
(G)\ra \Omega^{i+j}_X(\log D)(G)$, we can show that $\phi^i\wedge\phi^j_G
=\phi^{i+j}_G$ holds for any $i,j\geq 0$. Denote the composition of
$\HH^i(\nu)^{-1}:\HH^i(F_*\Omega^\bullet_X(\log D)(G))\ra \HH^i(F_*\Omega
^\bullet_X(\log D))$ with the Cartier isomorphism $C:\HH^i(F_*\Omega^\bullet
_X(\log D))\ra \Omega^i_{X'}(\log D')$ by $C_G:\HH^i(F_*(\Omega^\bullet_X
(\log D)(G)))\ra \Omega^i_{X'}(\log D')$. Then the morphism
\begin{eqnarray*}
\phi_G=\bigoplus_i\phi^i_G:\bigoplus_i\Omega^i_{X'}(\log D')[-i]\ra
F_*(\Omega^\bullet_X(\log D)(G))
\end{eqnarray*}
is a quasi-isomorphism since it induces the isomorphism $C^{-1}_G=\HH^i
(\nu)C^{-1}$ on each $\HH^i$.

\begin{prop}\label{2.7}
Let $X$ be a smooth scheme over $k$, $D=\sum_i D_i$ a simple normal crossing
divisor on $X$, and $G=\sum_ir_iD_i$ an effective divisor on $X$ with
$0\leq r_i<p$ for any $i$. Assume that $(X,D)$ has a lifting $(\wt{X},\wt{D})$
over $W_2(k)$ and the relative Frobenius morphism $F:X\ra X'$ has a lifting
$\wt{F}:\wt{X}\ra \wt{X}'$, which is compatible with $\wt{D}$.
Then $\phi^i_G:\Omega^i_{X'}(\log D')\ra F_*(\Omega^i_X(\log D)(G))$
is a split injection for any $i\geq 0$.
\end{prop}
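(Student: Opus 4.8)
The plan is to run the duality argument of the proof of Proposition \ref{2.5} with the ordinary Cartier isomorphism replaced by its twisted counterpart $C_G$ coming from Lemma \ref{2.6}, and with the compatibility relation $\phi^i\wedge\phi^j_G=\phi^{i+j}_G$ (established just before the statement) playing the role of $\phi^i\wedge\phi^j=\phi^{i+j}$. Write $n=\dim X$. Since in top degree every form is a cocycle, composing the canonical surjection $F_*(\Omega^n_X(\log D)(G))=\ZZ^n(F_*(\Omega^\bullet_X(\log D)(G)))\twoheadrightarrow\HH^n(F_*(\Omega^\bullet_X(\log D)(G)))$ with $C_G:\HH^n(F_*(\Omega^\bullet_X(\log D)(G)))\ra\Omega^n_{X'}(\log D')$ yields an $\OO_{X'}$-linear map $\bar C_G:F_*(\Omega^n_X(\log D)(G))\ra\Omega^n_{X'}(\log D')$. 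The key preliminary point is that $\bar C_G\circ\phi^n_G=\mathrm{Id}$: indeed $\phi^n_G$ is $\phi^n$ followed by $\nu$ and the inclusion $\ZZ^n\hookrightarrow F_*(\Omega^n_X(\log D)(G))$, so the class of $\phi^n_G(z)$ in $\HH^n$ equals $\HH^n(\nu)([\phi^n(z)])$, whence $\bar C_G(\phi^n_G(z))=C([\phi^n(z)])=z$ by the definition $C_G=C\circ\HH^n(\nu)^{-1}$ and the identity $C\phi^n=\mathrm{Id}$ from Theorem \ref{2.4}.

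Next I would imitate $\psi^i$ from Proposition \ref{2.5}: for a local section $\omega$ of $F_*(\Omega^i_X(\log D)(G))$ define $\psi^i_G(\omega)\in\HOM_{\OO_{X'}}(\Omega^{n-i}_{X'}(\log D'),\Omega^n_{X'}(\log D'))$ by $\psi^i_G(\omega)(\eta)=\bar C_G(\phi^{n-i}(\eta)\wedge\omega)$ for a local section $\eta$ of $\Omega^{n-i}_{X'}(\log D')$. This is well defined and $\OO_{X'}$-linear exactly as in Proposition \ref{2.5}, the only change being that the wedge now takes values in the twisted sheaf $\Omega^n_X(\log D)(G)$. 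For a local section $z$ of $\Omega^i_{X'}(\log D')$ one then computes
\[
\psi^i_G(\phi^i_G(z))(\eta)=\bar C_G\bigl(\phi^{n-i}(\eta)\wedge\phi^i_G(z)\bigr)=\bar C_G\bigl(\phi^n_G(\eta\wedge z)\bigr)=\eta\wedge z,
\]
using $\phi^{n-i}\wedge\phi^i_G=\phi^n_G$ and the preceding paragraph. Finally, since the wedge product is a perfect pairing between $\Omega^{n-i}_{X'}(\log D')$ and $\Omega^i_{X'}(\log D')$ with values in $\Omega^n_{X'}(\log D')$, the map $\psi^i_G$ factors through a homomorphism $\psi^i_G:F_*(\Omega^i_X(\log D)(G))\ra\Omega^i_{X'}(\log D')$ satisfying $\psi^i_G\circ\phi^i_G=\mathrm{Id}$, which is the desired splitting of $\phi^i_G$.

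I do not anticipate a genuine difficulty here: once the twisted top-degree map $\bar C_G$ is in place, the proof of Proposition \ref{2.5} transfers essentially verbatim. The points that demand care are purely bookkeeping ones --- keeping the twisted complex $F_*(\Omega^\bullet_X(\log D)(G))$ separate from the untwisted $F_*\Omega^\bullet_X(\log D)$, making sure that the pairing invoked at the end is the genuinely perfect one on $X'$ (which is untwisted), and checking that $C_G$ interacts with $\phi^n_G$ as claimed --- and all three are immediate from Lemma \ref{2.6}, Theorem \ref{2.4}, and the relation $\phi^i\wedge\phi^j_G=\phi^{i+j}_G$.
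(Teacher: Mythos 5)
Your proof is correct and follows essentially the same route as the paper: the paper likewise defines $\psi^i_G(\omega)(\eta)=C_G(\phi^{n-i}(\eta)\wedge\omega)$ using the top-degree map $C_G:F_*(\Omega^n_X(\log D)(G))=\ZZ^n_G\ra\Omega^n_{X'}(\log D')$ (your $\bar C_G$), verifies $\psi^i_G\phi^i_G=\mathrm{Id}$ via $\phi^{n-i}\wedge\phi^i_G=\phi^n_G$, and concludes with the perfect wedge pairing on $X'$. Your explicit check that $\bar C_G\circ\phi^n_G=\mathrm{Id}$ is a slightly more detailed rendering of the paper's remark that $C_G\phi^i_G=\mathrm{Id}$ splits the exact sequence defining $C_G$.
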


\begin{proof}
Denote $n=\dim X$, $\BB^i_G=\BB^i(F_*(\Omega^\bullet_X(\log D)(G)))$,
$\ZZ^i_G=\ZZ^i(F_*(\Omega^\bullet_X(\log D)(G)))$. By using the
isomorphism $C_G$, we have the following exact sequence for any $i\geq 0$:
\begin{eqnarray}
0\ra \BB^i_G\ra \ZZ^i_G\stackrel{C_G}{\ra} \Omega^i_{X'}(\log D')\ra 0.
\label{es5}
\end{eqnarray}
In fact, the exact sequence (\ref{es5}) splits since $C_G\phi^i_G=
\mathrm{Id}$ holds for any $i\geq 0$.

Define a homomorphism
\begin{eqnarray*}
\psi^i_G:F_*(\Omega^i_X(\log D)(G))\ra \HOM_{\OO_{X'}}(\Omega^{n-i}_{X'}
(\log D'),\Omega^n_{X'}(\log D'))
\end{eqnarray*}
by $\omega\mapsto\psi^i_G(\omega)$, where $\psi^i_G(\omega)(\eta)=C_G
(\phi^{n-i}(\eta)\wedge\omega)$ and $C_G:F_*(\Omega^n_X(\log D)(G))=
\ZZ^n_G\ra \Omega^n_{X'}(\log D')$ is the homomorphism in the exact
sequence (\ref{es5}) for $i=n$. Let $z$ be a local section of
$\Omega^i_{X'}(\log D')$. Then we have $\psi^i_G(\phi^i_G(z))(\eta)
=C_G(\phi^{n-i}(\eta)\wedge\phi^i_G(z))=C_G(\phi^n_G(\eta\wedge z))
=\eta\wedge z$. By using the perfect pairing between $\Omega^{n-i}_{X'}
(\log D')$ and $\Omega^i_{X'}(\log D')$ given by the wedge product,
we obtain a well-defined homomorphism
$\psi^i_G:F_*(\Omega^i_X(\log D)(G))\ra \Omega^i_{X'}(\log D')$
satisfying $\psi^i_G\phi^i_G=\mathrm{Id}$, which is the desired
splitting of $\phi^i_G$.
\end{proof}

\begin{thm}\label{2.8}
Notation and assumptions as in Proposition \ref{2.7}, let $H$ be a
$\Q$-divisor on $X$ with $\Supp(\langle H\rangle)\subseteq D$. Then
there exists a quasi-isomorphism of complexes of $\OO_{X'}$-modules:
\begin{eqnarray*}
\phi_H=\bigoplus_i\phi^i_H:\bigoplus_i\Omega^i_{X'}(\log D')(-\ulcorner
H'\urcorner)[-i]\ra F_*(\Omega^\bullet_X(\log D)(-\ulcorner pH\urcorner)).
\end{eqnarray*}
In fact, $\phi^i_H:\Omega^i_{X'}(\log D')(-\ulcorner H'\urcorner)\ra
F_*(\Omega^i_X(\log D)(-\ulcorner pH\urcorner))$ is a split injection
for any $i\geq 0$.
\end{thm}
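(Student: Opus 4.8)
The plan is to reduce Theorem \ref{2.8} to Proposition \ref{2.7} by a suitable twisting argument. First I would write $H=\sum_i h_i D_i + H_0$ where $H_0$ is the part of $H$ supported off $D$; since $\Supp(\langle H\rangle)\subseteq D$, the divisor $H_0$ is integral, so twisting the whole statement by $\OO_X(-H_0)$ (which is locally free and pulls back correctly under $F$, namely $F^*\OO_{X'}(-H_0')=\OO_X(-pH_0)$ since $H_0$ is integral) reduces us to the case $\Supp H\subseteq D$. Thus I may assume $H=\sum_i h_i D_i$ with $h_i\in\Q$. Write $h_i = \ulcorner h_i\urcorner - \langle -h_i\rangle$ is awkward; instead set $r_i := \ulcorner h_i\urcorner - h_i \in [0,1)$, equivalently $r_i=\langle -h_i\rangle$, and let $G=\sum_i r_i' D_i$ where $r_i'$ is the representative in $[0,p)$ making the arithmetic work — concretely I want to arrange $-\ulcorner pH\urcorner = -p\ulcorner H\urcorner + (\text{effective divisor }G)$ with $G=\sum g_i D_i$, $0\le g_i<p$. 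A direct check shows $g_i = p\ulcorner h_i\urcorner - \ulcorner p h_i\urcorner$ lies in $[0,p)$, so $G$ has the shape required by Proposition \ref{2.7}.

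Next I would exploit the relation $-\ulcorner pH\urcorner = -p\ulcorner H\urcorner + G$ to rewrite
\[
\Omega^i_X(\log D)(-\ulcorner pH\urcorner) = \Omega^i_X(\log D)(-p\ulcorner H\urcorner)\otimes\OO_X(G),
\]
and similarly, since $\ulcorner H'\urcorner$ is just the pullback of $\ulcorner H\urcorner$ under $F_k$ on the base (so $F^*\OO_{X'}(-\ulcorner H'\urcorner)=\OO_X(-p\ulcorner H\urcorner)$ because $\ulcorner H\urcorner$ is an honest integral divisor), we have a projection-formula identification
\[
F_*\bigl(\Omega^i_X(\log D)(-\ulcorner pH\urcorner)\bigr)
= F_*\bigl(\Omega^i_X(\log D)(G)\bigr)\otimes\OO_{X'}(-\ulcorner H'\urcorner).
\]
Granting this, the desired map $\phi^i_H$ is simply $\phi^i_G\otimes\mathrm{id}_{\OO_{X'}(-\ulcorner H'\urcorner)}$ after the identifications above, and the quasi-isomorphism statement $\phi_H=\bigoplus_i\phi^i_H$ follows because tensoring the quasi-isomorphism $\phi_G$ of Proposition \ref{2.7}'s discussion (the paragraph preceding it) with the locally free sheaf $\OO_{X'}(-\ulcorner H'\urcorner)$ preserves quasi-isomorphism. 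The split injectivity of each $\phi^i_H$ likewise follows by tensoring the splitting $\psi^i_G$ with $\OO_{X'}(-\ulcorner H'\urcorner)$: the composite $\psi^i_G\phi^i_G=\mathrm{Id}$ twists to $\psi^i_H\phi^i_H=\mathrm{Id}$.

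The main obstacle I anticipate is not the homological algebra but the bookkeeping with round-ups under the Frobenius pullback: one must verify carefully that $G:=p\ulcorner H\urcorner - \ulcorner pH\urcorner$ really is effective with coefficients strictly less than $p$ (so that Lemma \ref{2.6} and Proposition \ref{2.7} apply verbatim), and that the identification $F^*\OO_{X'}(-\ulcorner H'\urcorner)=\OO_X(-p\ulcorner H\urcorner)$ is legitimate — this uses that $\ulcorner H\urcorner$ is a genuine Cartier divisor on $X$ and that the relative Frobenius acts on a Cartier divisor $B$ by $F^*B' = pB$. I would also need to make sure the compatibility hypothesis on $\wt F$ with $\wt D$ is still the only lifting datum needed, which it is, since $G$ and $\ulcorner H\urcorner$ are both supported on $D$ (or off it, in the reduced-away part) and no new lifting of them is required — only $\OO_X(G)$ enters, exactly as in Proposition \ref{2.7}. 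Once these elementary divisor identities are pinned down, the theorem drops out by tensoring everything in sight by the invertible sheaf $\OO_{X'}(-\ulcorner H'\urcorner)$.
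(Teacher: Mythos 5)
Your proposal is correct and follows essentially the same route as the paper: the paper's proof also sets $G=p\ulcorner H\urcorner-\ulcorner pH\urcorner$, observes that $G$ satisfies the hypotheses of Proposition \ref{2.7}, and obtains $\phi^i_H$ by tensoring $\phi^i_G$ with the invertible sheaf $\OO_{X'}(-\ulcorner H'\urcorner)$ via the projection formula. Your preliminary splitting-off of the integral part $H_0$ supported away from $D$ is harmless but not needed, since for integral coefficients $p\ulcorner h\urcorner-\ulcorner ph\urcorner=0$ automatically, so $\Supp G\subseteq\Supp\langle H\rangle\subseteq D$ without any reduction.
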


\begin{proof}
Let $G=p\ulcorner H\urcorner-\ulcorner pH\urcorner$. Then it is easy to
see that $G$ satisfies the condition in Proposition \ref{2.7}. Tensoring
$\phi^i_G$ by $\OO_{X'}(-\ulcorner H'\urcorner)$, we can obtain the
conclusions.
\end{proof}

There is a slight generalization of \cite[Definition 2.3]{xie}.

\begin{defn}\label{2.9}
Let $X$ be a noetherian scheme over $k$. $X$ is said to be strongly
liftable over $W_2(k)$, if the following two conditions hold:
\begin{itemize}
\item[(i)] $X$ has a lifting $\wt{X}$ over $W_2(k)$;
\item[(ii)] For any effective Cartier divisor $D$ on $X$, there is a lifting
$\wt{D}$ of $D$ over $W_2(k)$ in the following sense: $D$ is regarded as
a closed subscheme of $X$, and $\wt{D}$ is a closed subscheme of the fixed
scheme $\wt{X}$ such that $\wt{D}$ is flat over $W_2(k)$ and $\wt{D}\times
_{\spec W_2(k)}\spec k=D$.
\end{itemize}
\end{defn}

If $X$ is a smooth scheme over $k$, then Definition \ref{2.9} is equivalent
to \cite[Definition 2.3]{xie}. Furthermore, it was proved in \cite{xie,xie11} 
that $\A^n_k$, $\PP^n_k$, smooth projective curves, smooth projective rational 
surfaces, certain smooth complete intersections in $\PP^n_k$, and smooth toric 
varieties are strongly liftable over $W_2(k)$.

Let $X$ be a noetherian scheme over $k$, $\wt{X}$ a lifting of $X$ over
$W_2(k)$, $\LL$ an invertible sheaf on $X$ and $\wt{\LL}$ a lifting of
$\LL$ on $\wt{X}$, i.e.\ $\wt{\LL}$ is an invertible sheaf on $\wt{X}$
such that $\wt{\LL}|_X=\LL$. Tensoring (\ref{es1}) by $\wt{\LL}$ and
taking cohomology groups, we obtain the following exact sequence:
\begin{eqnarray}
0\ra H^0(\wt{X},p\cdot\wt{\LL})\ra H^0(\wt{X},\wt{\LL})\stackrel{r}{\ra}
H^0(X,\LL)\ra H^1(\wt{X},p\cdot\wt{\LL}). \label{es6}
\end{eqnarray}

We recall here a sufficient condition for strong liftability of schemes
(cf.\ \cite[Proposition 3.4]{xie}).

\begin{prop}\label{2.10}
Let $X$ be a noetherian scheme over $k$, and $\wt{X}$ a lifting of $X$ over
$W_2(k)$. Assume that for any effective Cartier divisor $D$ on $X$, the
associated invertible sheaf $\LL=\OO_X(D)$ has a lifting $\wt{\LL}$ on
$\wt{X}$, and the natural map $r:H^0(\wt{X},\wt{\LL})\ra H^0(X,\LL)$
is surjective. Then $X$ is strongly liftable over $W_2(k)$.
\end{prop}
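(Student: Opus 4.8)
The plan is to verify directly that conditions (i) and (ii) of Definition \ref{2.9} hold for $X$ with the given lifting $\wt{X}$. Condition (i) is immediate, since $\wt{X}$ is by hypothesis a lifting of $X$ over $W_2(k)$. So the entire content of the proof lies in establishing condition (ii): given an arbitrary effective Cartier divisor $D$ on $X$, we must produce a closed subscheme $\wt{D}\subseteq\wt{X}$, flat over $W_2(k)$, with $\wt{D}\times_{\spec W_2(k)}\spec k=D$. The idea is to lift not the subscheme directly but rather the section of $\LL=\OO_X(D)$ whose zero locus is $D$, and then to define $\wt{D}$ as the zero locus of a lifted section of $\wt{\LL}$.

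First I would fix the effective Cartier divisor $D$ and let $s\in H^0(X,\LL)$ be a global section with $\divisor(s)=D$ (i.e. $D$ is the closed subscheme defined by the ideal sheaf $\Image(\LL^{-1}\xrightarrow{\,s\,}\OO_X)=\OO_X(-D)$). By hypothesis $\LL$ has a lifting $\wt{\LL}$ on $\wt{X}$, and the reduction map $r:H^0(\wt{X},\wt{\LL})\ra H^0(X,\LL)$ is surjective, so we may choose $\wt{s}\in H^0(\wt{X},\wt{\LL})$ with $r(\wt{s})=s$. Define $\wt{D}\subseteq\wt{X}$ to be the zero locus of $\wt{s}$, i.e. the closed subscheme defined by the ideal sheaf $\Image(\wt{\LL}^{-1}\xrightarrow{\,\wt{s}\,}\OO_{\wt{X}})$. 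The next step is to check that $\wt{D}\times_{\spec W_2(k)}\spec k=D$: since reduction modulo $p$ is right-exact, tensoring the map $\wt{\LL}^{-1}\xrightarrow{\wt{s}}\OO_{\wt{X}}$ with $\OO_X$ recovers $\LL^{-1}\xrightarrow{s}\OO_X$, so the ideal of $\wt{D}$ restricts to the ideal of $D$, as required.

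The main obstacle is showing that $\wt{D}$ is flat over $W_2(k)$. Here I would work locally and use the standard criterion: a finitely generated module $M$ over a $W_2(k)$-algebra is flat over $W_2(k)=$ (locally) over $\Z/p^2\Z$ iff $M$ is $p$-torsion-free, equivalently iff the sequence $0\ra M/pM\xrightarrow{p}M\ra M/pM\ra 0$ is exact, which by the local criterion of flatness reduces to $\tor_1^{W_2(k)}(M,k)=0$. Concretely, on an affine open $U=\spec A\subseteq X$ with lift $\wt{U}=\spec\wt{A}$, the divisor $D$ is cut out by a non-zero-divisor $f\in A$ and $\wt{D}$ by a lift $\wt{f}\in\wt{A}$; one must show $\wt{A}/\wt{f}\wt{A}$ is flat over $W_2(k)$. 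Since $\wt{A}$ is flat over $W_2(k)$ and $\wt{f}$ reduces to the non-zero-divisor $f$ on $A$, a diagram chase with the exact sequence (\ref{es1}) shows $\wt{f}$ is a non-zero-divisor on $\wt{A}$; then flatness of $\wt{A}/\wt{f}\wt{A}$ over $W_2(k)$ follows by tensoring $0\ra\wt{A}\xrightarrow{\wt{f}}\wt{A}\ra\wt{A}/\wt{f}\wt{A}\ra 0$ with $k$ over $W_2(k)$ and using that $\tor_1^{W_2(k)}(\wt{A},k)=0$ together with injectivity of $f$ on $A$ to conclude $\tor_1^{W_2(k)}(\wt{A}/\wt{f}\wt{A},k)=0$. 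This being checked on an affine cover, $\wt{D}$ is flat over $W_2(k)$, and condition (ii) is verified, so $X$ is strongly liftable over $W_2(k)$.
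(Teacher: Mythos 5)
Your proposal is correct and follows essentially the same route as the paper: lift the defining section $s$ of $\LL=\OO_X(D)$ to $\wt{s}\in H^0(\wt{X},\wt{\LL})$ using the surjectivity of $r$, and set $\wt{D}=\divisor_0(\wt{s})$. The only difference is that you spell out the flatness and base-change verification (non-zero-divisor lifting plus the $\tor_1$ computation over $W_2(k)$) that the paper dismisses as ``easy to see,'' and your argument for that step is sound.
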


\begin{proof}
Let $D$ be an effective Cartier divisor on $X$, $\LL=\OO_X(D)$ the associated
invertible sheaf on $X$, and $s\in H^0(X,\LL)$ the corresponding nonzero
section. By assumption, $\LL$ has a lifting $\wt{\LL}$ on $\wt{X}$, and
the natural map $r:H^0(\wt{X},\wt{\LL})\ra H^0(X,\LL)$ is surjective,
hence there is a nonzero section $\wt{s}\in H^0(\wt{X},\wt{\LL})$ with
$r(\wt{s})=s$. Define $\wt{D}=\divisor_0(\wt{s})$ to be the associated
effective Cartier divisor on $\wt{X}$. Then it is easy to see that $\wt{D}$
is a lifting of $D$ over $W_2(k)$.
\end{proof}

Let $X$ be a normal scheme over $k$, and $D$ a reduced Weil divisor on $X$.
Then there exists an open subset $U$ of $X$ such that $\codim_X(X-U)\geq 2$,
$U$ is smooth over $k$, and $D|_U$ is simple normal crossing on $U$.
Such an $U$ is called a required open subset for the log pair $(X,D)$.

\begin{ex}\label{2.11}
(i) Let $X$ be a smooth scheme over $k$, and $D$ a simple normal crossing
divisor on $X$. Then we can take the largest required open subset $U=X$.

(ii) Let $X=X(\Delta,k)$ be a toric variety, and $D$ a torus invariant reduced
Weil divisor on $X$. Take $U$ to be $X(\Delta_1,k)$, where $\Delta_1$ is the fan
consisting of all 1-dimensional cones in $\Delta$. Then it is easy to show that
$U$ is an open subset of $X$ with $\codim_X(X-U)\geq 2$, $U$ is smooth over $k$,
and $D|_U$ is simple normal crossing on $U$. Hence $U$ is a required open subset
for $(X,D)$.
\end{ex}

\begin{defn}\label{2.12}
Let $X$ be a normal scheme over $k$, and $D$ a reduced Weil divisor on $X$.
Take a required open subset $U$ for $(X,D)$, and denote $\iota:U\hookrightarrow X$
to be the open immersion. For any $i\geq 0$, define the Zariski sheaf of differential
$i$-forms of $X$ with logarithmic poles along $D$ by
\begin{eqnarray*}
\wt{\Omega}^i_X(\log D)=\iota_*\Omega^i_U(\log D|_U).
\end{eqnarray*}
Since $\codim_X(X-U)\geq 2$ and $\Omega^i_U(\log D|_U)$ is a locally free
sheaf on $U$, $\wt{\Omega}^i_X(\log D)$ is a reflexive sheaf on $X$, and
the definition of $\wt{\Omega}^i_X(\log D)$ is independent of the choice
of the open subset $U$. Furthermore we have the Zariski-de Rham complex
$(\wt{\Omega}^\bullet_X(\log D),d)$.
\end{defn}

\section{Proofs of the main theorems}\label{S3}

First of all, we will prove some more general results, which imply
the main theorems.

\begin{thm}\label{3.1}
Let $X$ be a normal projective variety over $k$, $D$ a reduced Weil divisor
on $X$, and $U$ a required open subset for $(X,D)$. Assume that $(U,D|_U)$
has a lifting $(\wt{U},\wt{D|_U})$ over $W_2(k)$ and $F:U\ra U'$ has a lifting
$\wt{F}:\wt{U}\ra \wt{U}'$, which is compatible with $\wt{D|_U}$. Then
$H^j(X,\wt{\Omega}^i_X(\log D)\otimes\LL)=0$ holds for any $j>0$, any $i\geq 0$
and any ample invertible sheaf $\LL$ on $X$.
\end{thm}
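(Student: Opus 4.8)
The statement is a Bott-type vanishing on a normal projective variety with a required open subset $U$ over which the relative Frobenius lifts compatibly with the divisor. The natural strategy is to prove vanishing on the smooth locus $U$ using the Deligne--Illusie machinery (Theorem \ref{2.4} and the splitting in Proposition \ref{2.5}), and then push forward along $\iota\colon U\hookrightarrow X$, using that $\codim_X(X-U)\ge 2$ to control the difference between $\iota_*$ on $U$ and cohomology on $X$. The key input is that, by Proposition \ref{2.5}, $\Omega^i_{U'}(\log D'|_{U'})$ is a direct summand of $F_*\Omega^i_U(\log D|_U)$, and more usefully, iterating the relative Frobenius, $\Omega^i_X(\log D)$ (suitably interpreted on $X$) becomes a direct summand of $F^{(m)}_*$ of itself twisted by a $p^m$-th power of $\LL$, which will let us reduce the required vanishing to Serre vanishing for $m\gg 0$.

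\emph{Step 1: reduce to a statement on $U$.} First I would note that since $\wt\Omega^i_X(\log D)=\iota_*\Omega^i_U(\log D|_U)$ is reflexive and $\LL$ is invertible, $\wt\Omega^i_X(\log D)\otimes\LL = \iota_*(\Omega^i_U(\log D|_U)\otimes\LL|_U)$. Because $\codim_X(X-U)\ge 2$ and $X$ is normal, $R^0\iota_* = \iota_*$ already computes the reflexive hull, and for the higher cohomology one has the local-to-global spectral sequence $H^p(X,R^q\iota_*(\cdots))\Rightarrow H^{p+q}(U,\cdots)$. The sheaves $R^q\iota_*$ for $q>0$ are supported on $X-U$, so they contribute only in a controlled way; the cleanest route is to avoid them entirely by working with $F$ on all of $X$.

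\emph{Step 2: globalize the Frobenius splitting.} The relative Frobenius $F\colon X\to X'$ is a finite morphism of the whole (normal) $X$, and $F_*\OO_X$ contains $\OO_{X'}$ as a direct summand precisely when $X$ is Frobenius split; toric varieties are, but more to the point the hypothesis gives a compatible Frobenius lifting on $U$, and by Proposition \ref{2.5} applied on $U$ we get a splitting $\psi^i$ of $\phi^i\colon\Omega^i_{U'}(\log D'|_{U'})\to F_*\Omega^i_U(\log D|_U)$. Pushing forward by $\iota$ and using $\codim\ge 2$ plus reflexivity, this splitting extends to a splitting of $\phi^i\colon \wt\Omega^i_{X'}(\log D')\to F_*\wt\Omega^i_X(\log D)$ on $X$. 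Hence, tensoring by $\LL$ and using the projection formula $F_*(\wt\Omega^i_X(\log D)\otimes F^*\LL)=F_*\wt\Omega^i_X(\log D)\otimes\LL$, we find that $\wt\Omega^i_{X'}(\log D')\otimes\LL$ is a direct summand of $F_*(\wt\Omega^i_X(\log D)\otimes F^*\LL)$, and $F^*\LL$ is again ample (it is $\LL^{\otimes p}$ up to the twist by Frobenius on the base). Iterating $m$ times: $\wt\Omega^i_{X^{(m)}}(\log D^{(m)})\otimes\LL$ is a direct summand of $F^{(m)}_*\bigl(\wt\Omega^i_X(\log D)\otimes\LL^{\otimes p^m}\bigr)$.

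\emph{Step 3: conclude by Serre vanishing.} Since $F^{(m)}$ is finite, $H^j$ of the right-hand side equals $H^j\bigl(X,\wt\Omega^i_X(\log D)\otimes\LL^{\otimes p^m}\bigr)$, which vanishes for $j>0$ and $m\gg 0$ by Serre vanishing (here I use that $\wt\Omega^i_X(\log D)$ is a coherent sheaf on the projective variety $X$ and $\LL$ is ample). A direct summand of a vanishing cohomology group vanishes, so $H^j\bigl(X,\wt\Omega^i_{X^{(m)}}(\log D^{(m)})\otimes\LL\bigr)=0$; but $X^{(m)}\cong X$ and $D^{(m)}\cong D$ over $k$ (the $m$-fold Frobenius twist of the base is an isomorphism on the level of $k$-schemes when $k$ is perfect), and $\LL$ pulls back to $\LL$, giving the claim $H^j(X,\wt\Omega^i_X(\log D)\otimes\LL)=0$ for all $j>0$, $i\ge 0$.

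\emph{Main obstacle.} The delicate point is Step 2: one must check that the local splitting $\psi^i$ produced by Proposition \ref{2.5} on $U$ genuinely extends to a global splitting of the corresponding map of reflexive sheaves on $X$, i.e.\ that applying $\iota_*$ commutes with forming $F_*$ and with the splitting, and that $F_*\wt\Omega^i_X(\log D)$ is itself the reflexive pushforward from $U$ (so that $\iota_*(F|_U)_*\Omega^i_U(\log D|_U)=F_*\iota_*\Omega^i_U(\log D|_U)$). This is where $\codim_X(X-U)\ge 2$, the finiteness of $F$, and normality of $X$ all get used together; once that identification is in place the rest is formal. I would also need a brief remark that $\Omega^i_{U'}(\log D'|_{U'})$ pushed forward to $X'$ is $\wt\Omega^i_{X'}(\log D')$, which follows since $U'$ is a required open subset for $(X',D')$ with complement of codimension $\ge 2$.
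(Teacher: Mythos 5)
Your proposal is correct and follows essentially the same route as the paper: take the split injection $\phi^i$ from Proposition \ref{2.5} on $U$, push it forward by $\iota'_*$ (using $\iota'_*F_*=F_*\iota_*$ and $\codim_X(X-U)\ge 2$) to get a split injection $\wt{\Omega}^i_{X'}(\log D')\to F_*\wt{\Omega}^i_X(\log D)$, tensor with $\LL$ via the projection formula, iterate to replace $\LL$ by $\LL^{p^m}$, and conclude by Serre vanishing using that $X'\cong X$ over the perfect field $k$. The only cosmetic difference is that the paper iterates the resulting injection at the level of cohomology groups rather than packaging the iteration as a direct-summand statement for $F^{(m)}_*$.
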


\begin{proof}
By assumption and Proposition \ref{2.5}, we have a split injection
$\phi^i:\Omega^i_{U'}(\log D'|_{U'})\ra F_*\Omega^i_U(\log D|_U)$ for any
$i\geq 0$. Let $\iota:U\hookrightarrow X$ and $\iota':U'\hookrightarrow X'$
be the open immersions. Applying $\iota'_*$ to $\phi^i$, since
$\iota'_*F_*=F_*\iota_*$, we have a split injection for any $i\geq 0$:
\begin{eqnarray*}
\iota'_*\phi^i:\wt{\Omega}^i_{X'}(\log D')\ra F_*\wt{\Omega}^i_X(\log D).
\end{eqnarray*}
Tensoring $\iota'_*\phi^i$ by $\LL'$ and using the projection formula,
since $k$ is perfect, we obtain an injection for any $j>0$:
\begin{eqnarray*}
H^j(X,\wt{\Omega}^i_X(\log D)\otimes\LL)=H^j(X',\wt{\Omega}^i_{X'}(\log D')
\otimes\LL')\hookrightarrow H^j(X,\wt{\Omega}^i_X(\log D)\otimes\LL^p).
\end{eqnarray*}
Iterating these injections, we obtain the desired vanishing by Serre's
vanishing.
\end{proof}

Associated with the Zariski-de Rham complex $\wt{\Omega}^\bullet_X(\log D)$,
there is a spectral sequence:
\begin{eqnarray}
E^{ij}_1=H^j(X,\wt{\Omega}^i_X(\log D))\Longrightarrow \mathbf{H}^{i+j}
(X,\wt{\Omega}^\bullet_X(\log D)), \label{es7}
\end{eqnarray}
where $\mathbf{H}^\bullet(X,\wt{\Omega}^\bullet_X(\log D))$ denotes the
hypercohomology of the complex $\wt{\Omega}^\bullet_X(\log D)$. (\ref{es7})
is called the Hodge to de Rham spectral sequence for the Zariski-de Rham
complex $\wt{\Omega}^\bullet_X(\log D)$.

\begin{thm}\label{3.2}
Notation and assumptions as in Theorem \ref{3.1}, then the Hodge to de Rham
spectral sequence (\ref{es7}) degenerates in $E_1$.
\end{thm}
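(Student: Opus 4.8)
The plan is to deduce the $E_1$-degeneration from the decomposition of the Zariski-de Rham complex obtained by applying Theorem \ref{2.4} on the required open subset $U$ and pushing forward. First I would note that by hypothesis $(U,D|_U)$ has a lifting together with a compatible lifting $\wt F$ of the relative Frobenius, so Theorem \ref{2.4} gives a quasi-isomorphism $\phi:\bigoplus_i\Omega^i_{U'}(\log D'|_{U'})[-i]\ra F_*\Omega^\bullet_U(\log D|_U)$ in the derived category of $\OO_{U'}$-modules. Applying $\iota'_*$ (where $\iota':U'\hookrightarrow X'$), and using $\iota'_*F_*=F_*\iota_*$ together with the fact that $\codim_{X}(X\smallsetminus U)\ge 2$ makes each $\iota'_*\Omega^i_{U'}(\log D'|_{U'})=\wt\Omega^i_{X'}(\log D')$ reflexive, I would obtain a morphism $\iota'_*\phi:\bigoplus_i\wt\Omega^i_{X'}(\log D')[-i]\ra F_*\wt\Omega^\bullet_X(\log D)$. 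The subtle point is that $\iota'_*$ is only left exact, so a priori this is merely a morphism in the derived category, not obviously a quasi-isomorphism; I will address this below.

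Next I would compare hypercohomologies. Granting that $\iota'_*\phi$ is a quasi-isomorphism, one gets
\begin{eqnarray*}
\mathbf H^n(X',\,\textstyle\bigoplus_i\wt\Omega^i_{X'}(\log D')[-i])
=\mathbf H^n(X',F_*\wt\Omega^\bullet_X(\log D))
=\mathbf H^n(X,\wt\Omega^\bullet_X(\log D)),
\end{eqnarray*}
where the last equality uses that $F$ is affine (finite), so $F_*$ is exact and computes the same hypercohomology, and that $k$ perfect makes cohomology over $X'$ and $X$ agree dimension-wise. The left-hand side is $\bigoplus_{i+j=n}H^j(X',\wt\Omega^i_{X'}(\log D'))=\bigoplus_{i+j=n}H^j(X,\wt\Omega^i_X(\log D))$. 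Hence $\dim_k\mathbf H^n(X,\wt\Omega^\bullet_X(\log D))=\sum_{i+j=n}\dim_k E_1^{ij}$ for every $n$. Since in any first-quadrant spectral sequence $\dim_k\mathbf H^n\le\sum_{i+j=n}\dim_k E_1^{ij}$ always holds, with equality (for all $n$) if and only if all differentials $d_r$ ($r\ge 1$) vanish, this forces $E_1$-degeneration. I would spell out the standard dimension-counting lemma only briefly.

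The main obstacle is exactly the claim that $\iota'_*$ preserves the quasi-isomorphism $\phi$, i.e. that $R^q\iota'_*$ of the terms does not interfere. The cleanest route is to instead push forward the \emph{split injections} $\phi^i:\Omega^i_{U'}(\log D'|_{U'})\ra F_*\Omega^i_U(\log D|_U)$ from Proposition \ref{2.5}: since these are split, $\iota'_*$ of them are again split injections $\wt\Omega^i_{X'}(\log D')\ra F_*\wt\Omega^i_X(\log D)$ (as in the proof of Theorem \ref{3.1}), and splitness is preserved by any additive functor. One then checks that these assemble, via the Cartier isomorphism, into a morphism of complexes $\bigoplus_i\wt\Omega^i_{X'}(\log D')[-i]\ra F_*\wt\Omega^\bullet_X(\log D)$ which on each sheaf $\HH^i$ of the target recovers $C^{-1}$; because $C$ and the $\phi^i$ are defined on $U'$ and the relevant sheaves are reflexive (being pushforwards from codimension $\ge 2$), verifying this identity may be done on $U'$, where it is Theorem \ref{2.4}. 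The only real care needed is that $\HH^i(F_*\wt\Omega^\bullet_X(\log D))$ agrees with $\iota'_*$ of $\HH^i(F_*\Omega^\bullet_U(\log D|_U))$, which again uses left-exactness of $\iota'_*$ on the cocycle/coboundary subsheaves plus the codimension condition. Once the quasi-isomorphism over $X'$ is in hand, the degeneration follows formally as above.
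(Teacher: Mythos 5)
Your overall strategy is the paper's: push the Deligne--Illusie decomposition from the required open subset $U$ down to $X$ and run a dimension count on the spectral sequence. However, as written your argument rests on the claim that $\iota'_*\phi$ is a quasi-isomorphism (``Granting that \dots'', ``Once the quasi-isomorphism over $X'$ is in hand \dots''), and the justification you sketch does not close this. The difficulty is exactly the one you flag: $\iota'_*$ is only left exact, so it commutes with kernels (hence with the cocycle sheaves $\ZZ^i$) but not with images or quotients; consequently $\HH^i(\iota'_*K^\bullet)$ need not coincide with $\iota'_*\HH^i(K^\bullet)$ --- there are only natural maps $\HH^i(\iota'_*K^\bullet)\twoheadrightarrow \iota'_*\ZZ^i/\iota'_*\BB^i\hookrightarrow\iota'_*\HH^i(K^\bullet)$, neither of which is an isomorphism in general. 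Checking the identity on $U'$ using reflexivity does not repair this, because the coboundary sheaves and the cohomology sheaves of $F_*\wt{\Omega}^\bullet_X(\log D)$ are not visibly pushforwards of locally free sheaves from $U'$. So the quasi-isomorphism over $X'$ is not actually in hand.

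Fortunately you do not need it, and you have already assembled the correct substitute. Each $\phi^i$ is a split injection by Proposition \ref{2.5}, splitness is preserved by the additive functor $\iota'_*$, and $\iota'_*\phi:\bigoplus_i\wt{\Omega}^i_{X'}(\log D')[-i]\ra F_*\wt{\Omega}^\bullet_X(\log D)$ is a morphism of complexes (the source has zero differentials and each $\phi^i$ lands in cocycles); the splitting $\bigoplus_i\psi^i$ is also a chain map since $C$ annihilates coboundaries. A split monomorphism of complexes induces an injection on hypercohomology, which gives
$\sum_{i+j=n}\dim_k E_1^{ij}=\dim_k\mathbf{H}^n\bigl(X',\bigoplus_i\wt{\Omega}^i_{X'}(\log D')[-i]\bigr)\le\dim_k\mathbf{H}^n(X,\wt{\Omega}^\bullet_X(\log D))=\sum_{i+j=n}\dim_k E_\infty^{ij}$.
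Combined with the universal inequality $\sum_{i+j=n}\dim_k E_\infty^{ij}\le\sum_{i+j=n}\dim_k E_1^{ij}$ (each $E_\infty^{ij}$ is a subquotient of $E_1^{ij}$), this forces $E_\infty^{ij}\cong E_1^{ij}$, i.e.\ degeneration at $E_1$. This one-sided inequality is precisely how the paper argues; replacing your quasi-isomorphism step by it removes the gap.
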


\begin{proof}
By Theorem \ref{2.4} and Proposition \ref{2.5}, there is a split injection of
complexes:
\begin{eqnarray*}
\phi:\bigoplus_i\Omega^i_{U'}(\log D'|_{U'})[-i]\ra F_*\Omega^\bullet_U
(\log D|_U).
\end{eqnarray*}
Applying $\iota'_*$ to $\phi$, since $\iota'_*F_*=F_*\iota_*$, we have
a split injection of complexes:
\begin{eqnarray*}
\iota'_*\phi:\bigoplus_i\wt{\Omega}^i_{X'}(\log D')[-i]\ra F_*\wt{\Omega}
^\bullet_X(\log D).
\end{eqnarray*}
Thus we have
\begin{eqnarray*}
\sum_{i+j=n}\dim_k E^{ij}_\infty &=& \dim_k \mathbf{H}^n(X,\wt{\Omega}
^\bullet_X(\log D))=\dim_k \mathbf{H}^n(X',F_*\wt{\Omega}^\bullet_X
(\log D)) \\
&\geq& \dim_k \mathbf{H}^n(X',\bigoplus_i\wt{\Omega}^i_{X'}(\log D')[-i])
=\sum_{i+j=n}\dim_k H^j(X',\wt{\Omega}^i_{X'}(\log D')) \\
&=& \sum_{i+j=n}\dim_k H^j(X,\wt{\Omega}^i_{X}(\log D))
=\sum_{i+j=n}\dim_k E^{ij}_1.
\end{eqnarray*}
Since $E^{ij}_\infty$ is a subquotient of $E^{ij}_1$, we have
$\sum_{i+j=n}\dim_k E^{ij}_\infty\leq \sum_{i+j=n}\dim_k E^{ij}_1$,
which implies $E^{ij}_\infty\cong E^{ij}_1$ for any $i,j$, i.e.\
the Hodge to de Rham spectral sequence degenerates in $E_1$.
\end{proof}

\begin{lem}\label{3.3}
Notation and assumptions as in Theorem \ref{3.1}, let $H$ be an ample
$\Q$-divisor on $X$ such that $\Supp(\langle H\rangle)\subseteq D$.
Then there is an injection for any $r>0$ and any $i,j\geq 0$:
\[
H^j(X,\wt{\Omega}^i_X(\log D)(-\ulcorner H\urcorner))\hookrightarrow
H^j(X,\wt{\Omega}^i_X(\log D)(-\ulcorner p^rH\urcorner)).
\]
\end{lem}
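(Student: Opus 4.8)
The plan is to mimic the argument of Theorem \ref{3.1}, but using the twisted version of the Deligne--Illusie decomposition recorded in Theorem \ref{2.8} in place of Proposition \ref{2.5}. First I would work on the required open subset $U$, where $U$ is smooth over $k$ and $D|_U$ is simple normal crossing, and where by hypothesis $(U,D|_U)$ lifts to $(\wt U,\wt{D|_U})$ over $W_2(k)$ together with a compatible lifting $\wt F:\wt U\ra \wt U'$ of the relative Frobenius. Since $\Supp(\langle H\rangle)\subseteq D$, the restriction $H|_U$ has fractional part supported on $D|_U$, so Theorem \ref{2.8} applies on $U$: for each $i\geq 0$ we get a split injection
\begin{eqnarray*}
\phi^i_{H}:\Omega^i_{U'}(\log D'|_{U'})(-\ulcorner H'|_{U'}\urcorner)\ra F_*\bigl(\Omega^i_U(\log D|_U)(-\ulcorner pH|_{U}\urcorner)\bigr).
\end{eqnarray*}

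Next I would push this forward along the open immersions $\iota:U\hookrightarrow X$ and $\iota':U'\hookrightarrow X'$. Because $\codim_X(X\setminus U)\geq 2$ and the sheaves involved are reflexive (they are pushforwards of locally free sheaves on $U$), $\iota'_*$ of the above map is a split injection of sheaves on $X'$. Here one must check that $\iota'_*$ of the left-hand side is $\wt\Omega^i_{X'}(\log D')(-\ulcorner H'\urcorner)$ and that $\iota'_*$ of the right-hand side is $F_*\bigl(\wt\Omega^i_X(\log D)(-\ulcorner pH\urcorner)\bigr)$; the first follows from Definition \ref{2.12} since twisting a reflexive sheaf by a line bundle and then taking the reflexive hull commutes with restriction to $U$, and the second uses the identity $\iota'_*F_*=F_*\iota_*$ already exploited in the proof of Theorem \ref{3.1}, together with the fact that $-\ulcorner pH\urcorner$ restricted to $U$ agrees with $-\ulcorner pH|_U\urcorner$ (codimension-two complements do not affect round-ups of Weil divisors). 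This yields, for each $i\geq 0$, a split injection of $\OO_{X'}$-modules
\begin{eqnarray*}
\wt\Omega^i_{X'}(\log D')(-\ulcorner H'\urcorner)\ra F_*\bigl(\wt\Omega^i_X(\log D)(-\ulcorner pH\urcorner)\bigr).
\end{eqnarray*}

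Taking cohomology and using that $k$ is perfect (so that $H^j(X',\mathcal G')=H^j(X,\mathcal G)$ for any coherent $\mathcal G$ on $X$, with the Frobenius twist only relabelling), the split injection gives an injection
\begin{eqnarray*}
H^j\bigl(X,\wt\Omega^i_X(\log D)(-\ulcorner H\urcorner)\bigr)\hookrightarrow H^j\bigl(X,\wt\Omega^i_X(\log D)(-\ulcorner pH\urcorner)\bigr)
\end{eqnarray*}
for every $i,j\geq 0$. Iterating this injection $r$ times, with $H$ replaced successively by $pH, p^2H,\dots$ (each of which still satisfies $\Supp(\langle p^sH\rangle)=\Supp(\langle H\rangle)\subseteq D$ because $p^sH$ has the same non-integral components), produces the desired injection into $H^j\bigl(X,\wt\Omega^i_X(\log D)(-\ulcorner p^rH\urcorner)\bigr)$.

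The step I expect to require the most care is the bookkeeping of round-ups under restriction and Frobenius pullback — specifically, verifying that $\iota'_*$ of the twisted log-de Rham sheaf on $U'$ is exactly the reflexive sheaf $\wt\Omega^i_{X'}(\log D')(-\ulcorner H'\urcorner)$, that $G=p\ulcorner H\urcorner-\ulcorner pH\urcorner$ really meets the hypothesis $0\leq r_i<p$ of Proposition \ref{2.7} coefficient by coefficient, and that the Frobenius twist $H'$ of the ample $\Q$-divisor $H$ behaves compatibly with $\LL\mapsto\LL'$, $\LL\mapsto\LL^p$ as in Theorem \ref{3.1}. None of these is deep, but each is a place where an off-by-one in the ceiling function would break the chain of injections; the geometric content is entirely contained in Theorem \ref{2.8} and the reflexivity argument of Theorem \ref{3.1}.
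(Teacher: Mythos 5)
Your proposal follows the paper's own proof essentially verbatim: apply Theorem \ref{2.8} on the required open subset $U$ to obtain the split injection $\phi^i_H$, push forward along $\iota'$ using $\iota'_*F_*=F_*\iota_*$ and reflexivity to get a split injection $\wt\Omega^i_{X'}(\log D')(-\ulcorner H'\urcorner)\ra F_*(\wt\Omega^i_X(\log D)(-\ulcorner pH\urcorner))$, take cohomology, and iterate. The extra care you flag about round-ups, codimension-two restriction, and the hypothesis surviving replacement of $H$ by $p^sH$ is correct and only makes explicit what the paper leaves implicit.
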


\begin{proof}
By Theorem \ref{2.8}, we have a split injection for any $i\geq 0$:
\[ \phi^i_H:\Omega^i_{U'}(\log D'|_{U'})(-\ulcorner H'|_{U'}\urcorner)
\ra F_*(\Omega^i_U(\log D|_U)(-\ulcorner pH|_U\urcorner)).
\]
Applying $\iota'_*$ to $\phi^i_H$, since $\iota'_*F_*=F_*\iota_*$,
we have a split injection for any $i\geq 0$:
\begin{eqnarray*}
\iota'_*\phi^i_H:\wt{\Omega}^i_{X'}(\log D')(-\ulcorner H'\urcorner)\ra
F_*(\wt{\Omega}^i_X(\log D)(-\ulcorner pH\urcorner)),
\end{eqnarray*}
which gives rise to an injection for any $i,j\geq 0$:
\begin{eqnarray*}
H^j(X,\wt{\Omega}^i_X(\log D)(-\ulcorner H\urcorner))\hookrightarrow
H^j(X,\wt{\Omega}^i_X(\log D)(-\ulcorner pH\urcorner)).
\end{eqnarray*}
Iterating these injections, we can obtain an injection for any $r>0$
and any $i,j\geq 0$:
\[
H^j(X,\wt{\Omega}^i_X(\log D)(-\ulcorner H\urcorner))\hookrightarrow
H^j(X,\wt{\Omega}^i_X(\log D)(-\ulcorner p^rH\urcorner)).
\]
\end{proof}

\begin{cor}\label{3.8}
Let $X$ be a smooth projective variety over $k$, $D$ a simple normal
crossing divisor on $X$, and $H$ an ample $\Q$-divisor on $X$ such that
$\Supp(\langle H\rangle)\subseteq D$. Assume that $(X,D)$ has a lifting
$(\wt{X},\wt{D})$ over $W_2(k)$ and $F:X\ra X'$ has a lifting
$\wt{F}:\wt{X}\ra \wt{X}'$, which is compatible with $\wt{D}$. Then
\[
H^j(X,\Omega^i_X(\log D)(-\ulcorner H\urcorner))=0
\]
holds for any $j<\dim X$ and any $i\geq 0$.
\end{cor}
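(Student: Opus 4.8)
The plan is to deduce Corollary \ref{3.8} from Lemma \ref{3.3} by a Serre-vanishing/Serre-duality argument, exactly parallel to the way Theorem \ref{3.1} is proved but now dualized so as to hit cohomology \emph{below} the top degree. Since $X$ is smooth and $D$ is simple normal crossing, the largest required open subset is $U=X$ (Example \ref{2.11}(i)), so $\wt{\Omega}^i_X(\log D)=\Omega^i_X(\log D)$ and Lemma \ref{3.3} applies verbatim with $U=X$; in particular the hypotheses of Theorem \ref{3.1}, Theorem \ref{2.8} and Lemma \ref{3.3} are all satisfied.

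First I would apply Serre duality on the smooth projective variety $X$: writing $n=\dim X$ and using the perfect pairing between $\Omega^i_X(\log D)$ and $\Omega^{n-i}_X(\log D)$ via the wedge product into $\Omega^n_X(\log D)=\omega_X(D)$, one gets
\[
H^j\bigl(X,\Omega^i_X(\log D)(-\ulcorner H\urcorner)\bigr)^\vee\cong
H^{n-j}\bigl(X,\Omega^{n-i}_X(\log D)(\ulcorner H\urcorner)\bigr).
\]
So the assertion "$H^j=0$ for all $j<n$ and all $i$" is equivalent to "$H^{j'}\bigl(X,\Omega^{i'}_X(\log D)(\ulcorner H\urcorner)\bigr)=0$ for all $j'>0$ and all $i'$". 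But this last vanishing is not literally an instance of Theorem \ref{3.1}, because $\ulcorner H\urcorner$ need not be ample as a \emph{divisor} — only $H$ is an ample $\Q$-divisor — so I would instead prove it directly, mirroring the proof of Theorem \ref{3.1} but keeping track of the round-ups.

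The second step, then, is to set $M=\ulcorner H\urcorner-H$, an effective $\Q$-divisor with $\Supp(\langle M\rangle)\subseteq\Supp(\langle H\rangle)\subseteq D$ (indeed $\langle M\rangle=\langle -H\rangle$ up to integral divisors), and to note $\ulcorner p^rH\urcorner = p^r\ulcorner H\urcorner - \ulcorner p^r M'\urcorner$-type bookkeeping so that, after dualizing Lemma \ref{3.3}, we get for each $i,j$ an injection
\[
H^j\bigl(X,\Omega^i_X(\log D)(\ulcorner H\urcorner)\bigr)\hookrightarrow
H^j\bigl(X,\Omega^i_X(\log D)(p^r\ulcorner H\urcorner - \ulcorner p^r M\urcorner)\bigr)
\]
whose target, for $r\gg0$, is a twist of $\Omega^i_X(\log D)$ by a divisor that is a large multiple of the ample $\Q$-divisor $H$ plus a bounded error, hence by an ample divisor. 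Then Serre vanishing kills it for $j>0$. Concretely it is cleanest to apply Lemma \ref{3.3} \emph{directly} to $H$ (not its dual): Lemma \ref{3.3} gives an injection $H^j(X,\Omega^i_X(\log D)(-\ulcorner H\urcorner))\hookrightarrow H^j(X,\Omega^i_X(\log D)(-\ulcorner p^rH\urcorner))$, and since $-\ulcorner p^rH\urcorner \sim -p^r H - (\text{effective bounded})$ as $\Q$-divisors, one rewrites the target via Serre duality as $H^{n-j}(X,\Omega^{n-i}_X(\log D)(\ulcorner p^rH\urcorner))^\vee$ and uses that $\ulcorner p^rH\urcorner$ dominates $p^rH$, hence is ample for $r\gg0$; combined with Theorem \ref{3.1} (applicable since that ample class \emph{is} a genuine ample invertible sheaf now) — or just with plain Serre vanishing — the target vanishes for $n-j<n$, i.e.\ $j>0$. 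Iterating/taking $r$ large then forces the source to vanish for all $j<n$.

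The main obstacle is the divisor-roundup bookkeeping: one must check that $\ulcorner p^rH\urcorner$ is an ample (integral) divisor for $r\gg0$, which follows because $\ulcorner p^rH\urcorner = p^rH + \langle -p^rH\rangle$ with $0\le\langle -p^rH\rangle<D$ a divisor supported on $D$ with coefficients bounded independently of $r$, so $\ulcorner p^rH\urcorner - p^rH$ is a fixed-bounded effective $\R$-divisor and $p^rH$ minus a bounded divisor stays ample for $r$ large; and one must be careful that Serre duality is being applied to the \emph{reflexive} sheaf $\Omega^i_X(\log D)$, which on the smooth locus is locally free — here trivial since $X$ itself is smooth, so no subtlety remains. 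Everything else is a routine repackaging of Lemma \ref{3.3}, Serre duality, and Serre vanishing, in the spirit of the proof of Theorem \ref{3.1}.
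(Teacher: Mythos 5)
Your proposal is correct and follows essentially the same route as the paper: the injection from Lemma~\ref{3.3} into $H^j(X,\Omega^i_X(\log D)(-\ulcorner p^rH\urcorner))$, followed by writing $\ulcorner p^rH\urcorner$ as a large multiple of the fixed ample Cartier divisor $mH$ plus one of finitely many bounded residual divisors supported on $D$, and then Serre duality plus Serre vanishing to kill the target in degrees $j<\dim X$ (the paper formalizes your ``bounded error'' via $p^r=sm+t$, $\ulcorner p^rH\urcorner=s(mH)+\ulcorner tH\urcorner$). The only blemishes are cosmetic and do not affect the argument: the wedge pairing gives $\Omega^i_X(\log D)^\vee\otimes\omega_X\cong\Omega^{n-i}_X(\log D)(-D)$, so your dualized groups should carry an extra $-D$, and the vanishing of the target $H^{n-j}$ holds for $n-j>0$, i.e.\ $j<n$, not ``$n-j<n$, i.e.\ $j>0$'' as written.
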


\begin{proof}
By Lemma \ref{3.3}, there is an injection for any $r>0$, any $i\geq 0$
and any $j<\dim X$:
\begin{eqnarray}
H^j(X,\Omega^i_X(\log D)(-\ulcorner H\urcorner))\hookrightarrow
H^j(X,\Omega^i_X(\log D)(-\ulcorner p^rH\urcorner)). \label{es8}
\end{eqnarray}
Assume that $mH$ is a Cartier divisor for some positive integer $m$.
Assume $p^r=sm+t$, where $0\leq t<m$. Then $\ulcorner p^rH\urcorner=
s(mH)+\ulcorner tH\urcorner$. We can take $r$ sufficiently large,
such that $s$ is sufficiently large. Since $\Omega^i_X(\log D)(-\ulcorner
tH\urcorner)$ is locally free,
\begin{eqnarray}
H^j(X,\Omega^i_X(\log D)(-\ulcorner p^rH\urcorner))=H^j(X,\Omega^i_X(\log D)
(-\ulcorner tH\urcorner)\otimes\OO_X(mH)^{-s})=0 \label{es9}
\end{eqnarray}
holds for any $0\leq t<m$ and any $j<\dim X$ by Serre's duality and Serre's vanishing.
Thus the injection (\ref{es8}) implies the desired vanishing.
\end{proof}

It follows from \cite[Theorem 5.71 and Corollary 5.72]{km} that Serre's duality
and the vanishing (\ref{es9}) hold only for Cohen-Macaulay sheaves on normal projective
varieties. In general, the reflexive sheaves $\wt{\Omega}^i_X(\log D)(-\ulcorner tH\urcorner)$
are not necessarily Cohen-Macaulay, hence we cannot use the same argument to get a generalization
of Corollary \ref{3.8} to normal projective varieties. In order to deal with the Kawamata-Viehweg
vanishing, it is helpful to introduce the following notion.

\begin{defn}\label{3.9}
Let $X$ be a normal projective variety of dimension $n$ over $k$. We say that Serre's
duality holds for any Weil divisor on $X$, if there is an isomorphism of $k$-vector spaces
for any Weil divisor $D$ on $X$ and any $i\geq 0$:
\[
H^i(X,\OO_X(D))^\vee\stackrel{\sim}{\longrightarrow} H^{n-i}(X,\OO_X(K_X-D)),
\]
where $K_X$ is the canonical divisor of $X$ (see \cite[Proposition 5.75]{km}).
\end{defn}

\begin{thm}\label{3.4}
Let $X$ be a normal projective variety over $k$, $D$ a reduced Weil divisor
on $X$, and $U$ a required open subset for $(X,D)$. Assume that $(U,D|_U)$
has a lifting $(\wt{U},\wt{D|_U})$ over $W_2(k)$ and $F:U\ra U'$ has a lifting
$\wt{F}:\wt{U}\ra \wt{U}'$, which is compatible with $\wt{D|_U}$.
Let $H$ be an ample $\Q$-divisor on $X$ such that $\Supp(\langle H\rangle)\subseteq D$.
Assume that Serre's duality holds for any Weil divisor on $X$.
Then $H^j(X,K_X+\ulcorner H\urcorner)=0$ holds for any $j>0$.
\end{thm}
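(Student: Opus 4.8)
The plan is to derive Theorem \ref{3.4} from Lemma \ref{3.3} together with Serre's duality for Weil divisors, by translating the desired vanishing $H^j(X,K_X+\ulcorner H\urcorner)=0$ into a statement about the reflexive sheaf $\wt{\Omega}^n_X(\log D)(-\ulcorner pH\urcorner)$ in degree $n=\dim X$. First I would observe that $\wt{\Omega}^n_X(\log D)=\OO_X(K_X+D)$ as reflexive sheaves, since on the required open subset $U$ the top logarithmic differentials are $\Omega^n_U(\log D|_U)=\OO_U(K_U+D|_U)$ and both sides are reflexive extensions across a codimension-two locus. Hence $\wt{\Omega}^n_X(\log D)(-\ulcorner H\urcorner)=\OO_X(K_X+D-\ulcorner H\urcorner)$, and I want to connect this to $\OO_X(K_X+\ulcorner H\urcorner)$.

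The key computation is the following: write $H'=(D-\langle H\rangle)+H$, which is an ample $\Q$-divisor (it differs from $H$ by the reduced effective divisor $D-\langle H\rangle$, i.e.\ the components of $D$ not in $\Supp\langle H\rangle$, plus $\langle H\rangle$ itself packaged so that $\Supp\langle H'\rangle\subseteq D$ still holds). Then $\ulcorner H'\urcorner = (D-\langle H\rangle)+\ulcorner H\urcorner$ since $D-\langle H\rangle$ is integral, so $K_X+D-\ulcorner H'\urcorner = K_X-\ulcorner H\urcorner + \langle H\rangle$, wait — I need to arrange things so that applying Lemma \ref{3.3} to the $i=n$ piece and then taking $r$ large produces the target. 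The correct route is to apply Lemma \ref{3.3} with $i=n$ and $j>0$: it gives an injection $H^j(X,\OO_X(K_X+D-\ulcorner H\urcorner))\hookrightarrow H^j(X,\OO_X(K_X+D-\ulcorner p^rH\urcorner))$ for all $r>0$. Now by Serre's duality for Weil divisors (Definition \ref{3.9}), $H^j(X,\OO_X(K_X+D-\ulcorner p^rH\urcorner))^\vee\cong H^{n-j}(X,\OO_X(\ulcorner p^rH\urcorner - D))$, and since $p^rH-D$ is ample for $r\gg0$ with $\ulcorner p^rH\urcorner-D$ differing from an integral multiple of a Cartier-izing ample divisor by a bounded-denominator correction, the standard Serre vanishing for the Cartier case applied on a resolution — or rather, directly via the asymptotic Serre vanishing argument as in the proof of Corollary \ref{3.8} — forces $H^{n-j}(X,\OO_X(\ulcorner p^rH\urcorner-D))=0$ for $0<j$ once $r$ is large enough. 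Therefore $H^j(X,\OO_X(K_X+D-\ulcorner H\urcorner))=0$ for all $j>0$, and it remains to identify $K_X+D-\ulcorner H\urcorner$ with $K_X+\ulcorner H\urcorner$ up to the freedom in choosing $H$.

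The bookkeeping step that makes this work is to replace $H$ by $H_0:=D-\langle H\rangle+H$ at the outset, or equivalently to run Lemma \ref{3.3} with a cleverly chosen ample $\Q$-divisor. Concretely: given the ample $\Q$-divisor $H$ in the statement, set $\bar H := H - D + \langle H\rangle$ — no, this need not be effective; instead the cleanest move is: the quantity we want, $K_X+\ulcorner H\urcorner$, can be rewritten, using $\ulcorner H\urcorner = D - \langle H\rangle + \ulcorner H\urcorner$ minus $(D-\langle H\rangle)$, hmm. Let me state it as the paper surely intends: apply the above with the ample $\Q$-divisor $H$ satisfying $\Supp\langle H\rangle\subseteq D$ replaced by any ample $\Q$-divisor $G$ with $\Supp\langle G\rangle\subseteq D$ and $\ulcorner G\urcorner = D - \ulcorner H\urcorner$ — such a $G$ exists because $-\ulcorner H\urcorner + D$ can be perturbed to an ample $\Q$-divisor with prescribed round-up and fractional support in $D$ (choosing $\langle G\rangle$ appropriately supported on $D$ and $[G]$ to absorb the ampleness; this is where one uses that $H$ itself is ample to guarantee the perturbation stays ample). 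Then $K_X+D-\ulcorner G\urcorner = K_X+\ulcorner H\urcorner$, and the vanishing above reads $H^j(X,K_X+\ulcorner H\urcorner)=0$ for $j>0$.

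The main obstacle I anticipate is precisely this last translation: ensuring that the pair $(H,D)$ appearing in Lemma \ref{3.3} can be chosen so that the top-degree output sheaf $\OO_X(K_X+D-\ulcorner H\urcorner)$ is exactly $\OO_X(K_X+\ulcorner H_{\mathrm{orig}}\urcorner)$ while keeping $H$ ample with fractional part supported on $D$; this is a purely combinatorial/numerical manipulation of $\Q$-divisors but requires care about whether $D$ contains enough components. A secondary subtlety is justifying the Serre-vanishing step for the Weil (non-Cartier) divisors $\OO_X(\ulcorner p^rH\urcorner-D)$: one writes $\ulcorner p^rH\urcorner = s\,mH + \ulcorner tH\urcorner$ as in Corollary \ref{3.8} with $mH$ Cartier and ample, so $\OO_X(\ulcorner p^rH\urcorner - D) = \OO_X(\ulcorner tH\urcorner - D)\otimes\OO_X(mH)^{\otimes s}$ with finitely many possibilities for the coherent sheaf $\OO_X(\ulcorner tH\urcorner - D)$, and ordinary Serre vanishing for the ample Cartier sheaf $\OO_X(mH)$ finishes it; this needs no Cohen–Macaulay hypothesis because Serre vanishing $H^{>0}$... $H^{n-j}$ for $n-j<n$, i.e.\ $j>0$, of a coherent sheaf twisted by high powers of an ample line bundle is unconditional. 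With these two points settled the proof is short.
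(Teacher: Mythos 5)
Your reduction to Lemma \ref{3.3} is the right instinct, but you chose the wrong graded piece: the paper takes $i=0$, not $i=n$, and your attempt to force the $i=n$ output into the desired shape cannot work. With $i=n$ you correctly identify $\wt{\Omega}^n_X(\log D)=\OO_X(K_X+D)$ and get an injection into $H^j(X,\OO_X(K_X+D-\ulcorner p^rH\urcorner))$, and the Serre duality/Serre vanishing bookkeeping you describe is fine; but to conclude you must produce an ample $\Q$-divisor $G$ with $\Supp(\langle G\rangle)\subseteq D$ and $\ulcorner G\urcorner=D-\ulcorner H\urcorner$, and no such $G$ exists in general. Any $\Q$-divisor satisfies $G\le\ulcorner G\urcorner$, so your $G$ would satisfy $G\le D-\ulcorner H\urcorner\le D-H$; intersecting with $A^{n-1}$ for an ample $A$ gives $G\cdot A^{n-1}\le (D-H)\cdot A^{n-1}$, which is negative as soon as $H$ is numerically larger than the reduced divisor $D$ (e.g.\ $X=\PP^2$, $D$ a line $L$, $H=\tfrac{21}{2}L$: then $\ulcorner G\urcorner=-10L$ forces $G$ to have negative degree, so $G$ cannot be ample). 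The ``purely combinatorial perturbation'' you hope for is therefore impossible, and the $i=n$ route, carried out honestly, proves a Norimatsu-type statement $H^j(X,\ulcorner H\urcorner-D)=0$ for $j>0$ rather than the asserted $H^j(X,K_X+\ulcorner H\urcorner)=0$.

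The paper avoids this entirely by taking $i=0$ in Lemma \ref{3.3}: this gives an injection $H^j(X,\OO_X(-\ulcorner H\urcorner))\hookrightarrow H^j(X,\OO_X(-\ulcorner p^rH\urcorner))$ for $j<n$; writing $p^r=sm+t$ with $mH$ Cartier, so that $\ulcorner p^rH\urcorner=s(mH)+\ulcorner tH\urcorner$, Serre's duality for Weil divisors identifies the target with $H^{n-j}(X,\OO_X(K_X+\ulcorner tH\urcorner)\otimes\OO_X(mH)^s)^\vee$, which vanishes for $j<n$ by Serre's vanishing (finitely many sheaves $\OO_X(K_X+\ulcorner tH\urcorner)$, $s\gg 0$); hence $H^j(X,\OO_X(-\ulcorner H\urcorner))=0$ for $j<n$, and one more application of Serre's duality gives $H^j(X,K_X+\ulcorner H\urcorner)=0$ for $j>0$. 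Your remark that the asymptotic Serre vanishing step needs no Cohen--Macaulay hypothesis is correct and is exactly why Definition \ref{3.9} is assumed, but the proof as you propose it does not close.
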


\begin{proof}
Taking $i=0$ in Lemma \ref{3.3}, we have an injection for any $r>0$
and any $j<\dim X$:
\begin{eqnarray}
H^j(X,\OO_X(-\ulcorner H\urcorner))\hookrightarrow
H^j(X,\OO_X(-\ulcorner p^rH\urcorner)). \label{es10}
\end{eqnarray}
By assumption, Serre's duality holds for any Weil divisor on $X$. Hence by Serre's 
vanishing, Serre's duality and a similar argument to that in the proof of Corollary 
\ref{3.8}, we have $H^j(X,\OO_X(-\ulcorner p^rH\urcorner))=0$ for any $j<\dim X$ and 
$r\gg 0$. The injection (\ref{es10}) implies that $H^j(X,\OO_X(-\ulcorner H\urcorner))=0$ 
holds for any $j<\dim X$, which implies that $H^j(X,K_X+\ulcorner H\urcorner)=0$ 
holds for any $j>0$ by Serre's duality again.
\end{proof}

\begin{thm}\label{3.5}
Let $X=X(\Delta)$ be a toric variety over $k$. Then there exists
a lifting $\wt{X}$ of $X$ over $W_2(k)$ and a lifting $\wt{F}:\wt{X}
\ra \wt{X}'$ of the relative Frobenius morphism $F:X\ra X'$ such that
the following two conditions hold:
\begin{itemize}
\item[(i)] For any effective Cartier divisor $E$ on $X$, there is a lifting
$\wt{E}\subset \wt{X}$ of $E\subset X$, i.e.\ $X$ is strongly liftable;
\item[(ii)] $\wt{F}$ is compatible with $\wt{E}$ which is taken in (i).
\end{itemize}
\end{thm}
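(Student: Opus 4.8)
The statement is local-to-global in nature: toric varieties are built by gluing affine toric varieties $U_\sigma = \spec k[S_\sigma]$ over cones $\sigma \in \Delta$, and the torus $T$ acts on everything. The natural candidate for the lifting is the ``same'' toric variety over $W_2(k)$: define $\wt U_\sigma = \spec W_2(k)[S_\sigma]$ and glue them via the same combinatorial data to get $\wt X = X(\Delta, W_2(k))$. Since $W_2(k)[S_\sigma]$ is $W_2(k)$-free (hence flat) and reduces modulo $p$ to $k[S_\sigma]$, this $\wt X$ is a lifting of $X$ over $W_2(k)$ in the sense of Definition \ref{2.2}. The first thing I would do is set this up carefully and note that $\wt X' = X(\Delta, W_2(k))$ as well, because $F_{W_2(k)}$ acts on $W_2(k)$ and base change along it does not change the combinatorial fan; the relevant bookkeeping is that $F_{W_2(k)}$ is not the identity, so $\wt X'$ is $\wt X$ with its $W_2(k)$-structure twisted by Frobenius.

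The key construction is the lifted relative Frobenius $\wt F : \wt X \to \wt X'$. On each affine chart, the relative Frobenius of a toric variety is given on the level of semigroup rings by the ``$p$-th power'' map on characters: $F^*$ sends $\chi^m \otimes 1$ to $\chi^{pm}$ (this is exactly why toric relative Frobenius lifts — it is a monomial map). The plan is to define $\wt F^*$ on $W_2(k)[S_\sigma]$ by the same formula $\chi^m \otimes 1 \mapsto \chi^{pm}$, linearly over $F_{W_2(k)}: W_2(k) \to W_2(k)$. I would check this is a well-defined $W_2(k)$-algebra homomorphism (it respects the semigroup structure trivially, and the $F_{W_2(k)}$-semilinearity is what makes it a map to $\wt X'$), that these maps glue over the fan, and that reducing modulo $p$ recovers $F : X \to X'$ since $F_{W_2(k)} \bmod p = F_k$ and $\chi^m \mapsto \chi^{pm}$ is precisely the relative Frobenius on $k[S_\sigma]$. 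This gives conditions (i)'s lifting $\wt X$ together with the $\wt F$ sought.

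For strong liftability, part (i), I would invoke Proposition \ref{2.10}: it suffices to show that for any effective Cartier divisor $E$ on $X$, the sheaf $\LL = \OO_X(E)$ lifts to an invertible sheaf $\wt\LL$ on $\wt X$ and that $H^0(\wt X, \wt\LL) \to H^0(X, \LL)$ is surjective. Here I would use the torus action: on a toric variety, an effective Cartier divisor is linearly equivalent to a torus-invariant Cartier divisor $D_E = \sum a_i D_i$, so $\LL \cong \OO_X(D_E)$. The torus-invariant divisor $D_E$ has an obvious lift $\wt D_E = \sum a_i \wt D_i$ on $\wt X$ (where $\wt D_i$ is the toric divisor of the lifted variety), giving $\wt\LL = \OO_{\wt X}(\wt D_E)$, a lift of $\LL$. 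The surjectivity of $r : H^0(\wt X, \wt\LL) \to H^0(X, \LL)$ follows because global sections of $\OO_X(D_E)$ are spanned by the monomials $\chi^m$ for lattice points $m$ in the polytope $P_{D_E}$, and each such monomial lifts tautologically to a section $\chi^m \in H^0(\wt X, \OO_{\wt X}(\wt D_E))$ over $W_2(k)$ — so $r$ is even split. Then Proposition \ref{2.10} yields that $X$ is strongly liftable, and moreover by Proposition \ref{2.10}'s proof, given an arbitrary effective Cartier $E$ we may, after replacing $E$ by the linearly equivalent $D_E$, take the lift $\wt E$ inside the fixed $\wt X$ to be the torus-invariant divisor $\wt D_E$.

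Finally, for part (ii), I must verify $\wt F^* \OO_{\wt X'}(-\wt E') = \OO_{\wt X}(-p \wt E)$ for the lift $\wt E = \wt D_E$ chosen above. Because $\wt E$ is torus-invariant and $\wt F$ is the monomial map $\chi^m \mapsto \chi^{pm}$, this is a direct computation with the piecewise-linear support functions: if $\wt E$ corresponds to the $\Delta$-linear function $h$, then $\wt F^*$ pulls back the local equation of $\wt E'$ on a chart (a monomial $\chi^{m_\sigma}$ with $\langle m_\sigma, \cdot\rangle = h$ on $\sigma$) to $\chi^{p m_\sigma}$, which is exactly the local equation of $p\wt E$. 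I expect \textbf{this compatibility on the chosen torus-invariant lift} to be the only genuinely computational point, but it is routine toric bookkeeping; the real content of the theorem — and the part requiring the most care in writing — is the simultaneous bundling of the three requirements (a single $\wt X$ that is strongly liftable, a single $\wt F$ on it, and compatibility of that one $\wt F$ with lifts of \emph{all} effective Cartier divisors), which works precisely because the monomial nature of toric Frobenius makes it compatible with \emph{every} torus-invariant divisor at once.
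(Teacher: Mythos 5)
Your proposal follows essentially the same route as the paper: the lift $\wt{X}=X(\Delta,W_2(k))$, the semilinear monomial lift $\chi^u\mapsto\chi^{pu}$ of the Frobenius, Proposition \ref{2.10} applied via a torus-invariant representative and the polytope description of global sections, and the support-function computation for compatibility. The only point to tighten is that the lift $\wt{E}$ furnished by Proposition \ref{2.10} is $\divisor_0(\wt{s})$ for a lifted section (a lift of $E$ itself, not of the torus-invariant $D_E$), and compatibility of $\wt{F}$ with this $\wt{E}$ is then deduced from its linear equivalence with the torus-invariant lift --- exactly as the paper does.
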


\begin{proof}
First of all, we recall some definitions and notation for toric varieties
from \cite{fu93}. Let $N$ be a lattice of rank $n$, and $M$ the dual lattice
of $N$. Let $\Delta$ be a fan consisting of strongly convex rational
polyhedral cones in $N_\R$, and let $A$ be a ring. In general, we denote
the toric variety associated to the fan $\Delta$ over the ground ring $A$
by $X(\Delta,A)$. More precisely, to each cone $\sigma$ in $\Delta$, there
is an associated affine toric variety $U_{(\sigma,A)}=\spec A[\sigma^\vee
\cap M]$, and these $U_{(\sigma,A)}$ can be glued together to form the toric
variety $X(\Delta,A)$ over $\spec A$. Note that almost all definitions,
constructions and results for toric varieties are independent of the ground
ring $A$, although everything is stated in \cite{fu93} over the complex
number field $\C$.

In our case, $X=X(\Delta,k)$ is the toric variety over $k$ associated to the
fan $\Delta$. Let $\wt{X}=X(\Delta,W_2(k))$. Note that $U_{(\sigma,k)}=\spec
k[\sigma^\vee\cap M]$, $U_{(\sigma,W_2(k))}=\spec W_2(k)[\sigma^\vee\cap M]$
is flat over $W_2(k)$ and $U_{(\sigma,W_2(k))}\times_{\spec W_2(k)}\spec k
=U_{(\sigma,k)}$, hence $\wt{X}$ is a lifting of $X$ over $W_2(k)$.

By definition, to lift the relative Frobenius morphism $F:X\ra X'$ over
$W_2(k)$, we have only to lift the absolute Frobenius morphism $F_X:X\ra X$
over $W_2(k)$. On each affine piece $U_{(\sigma,W_2(k))}$, define $\wt{F}_
\sigma:U_{(\sigma,W_2(k))}\ra U_{(\sigma,W_2(k))}$ by $F_{W_2(k)}:W_2(k)\ra
W_2(k)$ and $\sigma^\vee\cap M\ra \sigma^\vee\cap M$, $u\mapsto pu$. It is
easy to see that we can glue $\wt{F}_\sigma$ together to obtain a morphism
$\wt{F}_X:\wt{X}\ra \wt{X}$ lifting $F_X:X\ra X$.

Let $\LL=\OO_X(E)$ be the associated invertible sheaf on $X$.
By \cite[Page 63, Proposition]{fu93}, we have an exact sequence:
\[
0\ra M\ra \Div_T(X)\ra \Pic(X)\ra 0.
\]
Thus there exists a torus invariant Cartier divisor $D$ on $X$ such that $E$
is linearly equivalent to $D$. Assume that $\{ u(\sigma)\in M/M(\sigma) \}\in
\varprojlim M/M(\sigma)$ determines the torus invariant Cartier divisor $D$.
Then the same data $\{ u(\sigma)\in M/M(\sigma) \}$ also determines a torus
invariant Cartier divisor $\wt{D}$ on $\wt{X}$ (we have only to change the
base $k$ into $W_2(k)$). Thus the invertible sheaf $\LL=\OO_X(E)=\OO_X(D)$
has a lifting $\wt{\LL}=\OO_{\wt{X}}(\wt{D})$ on $\wt{X}$. Let $v_i$ be the
first lattice points in the edges of the maximal dimensional cones in
$\Delta$, $D_i$ the corresponding orbit closures in $X$, and $\wt{D}_i$ the
corresponding orbit closures in $\wt{X}$ ($1\leq i\leq N$). Then the torus
invariant Cartier divisors $D=\sum^N_{i=1}a_iD_i$ and $\wt{D}=\sum^N_{i=1}a_i
\wt{D}_i$ determine a rational convex polyhedral $P_D$ in $M_\R$ defined by
\[
P_D=\{ u\in M_\R\,\,|\,\,\langle u,v_i\rangle\geq -a_i,\,\,1\leq i\leq N \}.
\]
By \cite[Page 66, Lemma]{fu93}, we have
\[
H^0(X,D)=\bigoplus_{u\in P_D\cap M}k\cdot\chi^u,\,\,
H^0(\wt{X},\wt{D})=\bigoplus_{u\in P_D\cap M}W_2(k)\cdot\chi^u.
\]
Thus the map $H^0(\wt{X},\wt{D})\stackrel{r}{\ra}H^0(X,D)$ induced by the
natural surjection $W_2(k)\stackrel{r}{\ra}k$ is obviously surjective.
Hence $r:H^0(\wt{X},\wt{\LL})\ra H^0(X,\LL)$ is surjective.
By Proposition \ref{2.10}, $X$ is strongly liftable over $W_2(k)$.

By construction, assume that $\wt{E}\subset \wt{X}$ is a lifting of
$E\subset X$ such that $\wt{E}$ is linearly equivalent to $\wt{D}$.
By the definition of the lifting $\wt{F}_X:\wt{X}\ra \wt{X}$, we have
$\wt{F}^*_X\OO_{\wt{X}}(-\wt{D})=\OO_{\wt{X}}(-p\wt{D})$ holds, hence
$\wt{F}^*_X\OO_{\wt{X}}(-\wt{E})=\OO_{\wt{X}}(-p\wt{E})$ holds.
\end{proof}

\begin{cor}\label{3.6}
Let $X=X(\Delta,k)$ be a toric variety, and $D$ a reduced Weil
divisor on $X$. Then there is a required open subset $U$ for $(X,D)$,
such that $(U,D|_U)$ has a lifting $(\wt{U},\wt{D|_U})$ over $W_2(k)$
and $F:U\ra U'$ has a lifting $\wt{F}:\wt{U}\ra \wt{U}'$, which is
compatible with $\wt{D|_U}$.
\end{cor}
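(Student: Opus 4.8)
The plan is to reduce everything to the smooth toric variety determined by the $1$-dimensional cones of $\Delta$ and then invoke Theorem \ref{3.5} there. Let $\Delta_1$ be the subfan of $\Delta$ consisting of all cones of dimension at most $1$, and put $Y=X(\Delta_1,k)$. As in Example \ref{2.11}(ii), $Y$ is a smooth open toric subvariety of $X$ with $\codim_X(X-Y)\geq 2$. Since $Y$ is smooth and $D$ is a reduced Weil divisor, the restriction $D|_Y$ is an effective Cartier divisor on $Y$. Let $U\subseteq Y$ be the open subset over which $D|_Y$ is simple normal crossing; since the locus where a reduced divisor on a smooth variety fails to be simple normal crossing is closed of codimension $\geq 2$, we get $\codim_Y(Y-U)\geq 2$, hence $\codim_X(X-U)\geq 2$. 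Thus $U$ is a required open subset for $(X,D)$.

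Next I would apply Theorem \ref{3.5} to the toric variety $Y$ itself. It provides the lifting $\wt{Y}=X(\Delta_1,W_2(k))$ of $Y$ over $W_2(k)$, a lifting $\wt{F}_Y:\wt{Y}\ra\wt{Y}'$ of the relative Frobenius morphism $F:Y\ra Y'$, and, because $D|_Y$ is an effective Cartier divisor on $Y$, a lifting $\wt{D|_Y}\subset\wt{Y}$ of $D|_Y$ over $W_2(k)$ such that $\wt{F}_Y$ is compatible with $\wt{D|_Y}$ (conditions (i) and (ii) of Theorem \ref{3.5}).

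It remains to restrict all of this data to $U$. Since $\wt{Y}\ra Y$ is a homeomorphism on underlying topological spaces, the open subset $U$ determines an open subscheme $\wt{U}\subseteq\wt{Y}$, which is flat over $W_2(k)$ and satisfies $\wt{U}\times_{\spec W_2(k)}\spec k=U$; setting $\wt{D|_U}=\wt{D|_Y}\cap\wt{U}$, one gets that $(\wt{U},\wt{D|_U})$ is a lifting of $(U,D|_U)$ over $W_2(k)$. Since the relative Frobenius $F:Y\ra Y'$ is a universal homeomorphism carrying $U$ onto the open subset $U'\subseteq Y'$ corresponding to $U$, the morphism $\wt{F}_Y$ restricts to a lifting $\wt{F}:\wt{U}\ra\wt{U}'$ of $F:U\ra U'$. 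Finally, restricting the compatibility relation $\wt{F}_Y^*\OO_{\wt{Y}'}(-\wt{D|_Y}')=\OO_{\wt{Y}}(-p\,\wt{D|_Y})$ to the open subscheme $\wt{U}$ shows that $\wt{F}$ is compatible with $\wt{D|_U}$, as desired.

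Most of the substance is carried by Theorem \ref{3.5}, which is already available; in this corollary the only points needing a little care are verifying that $D|_Y$ is genuinely Cartier on the smooth variety $Y$ and that deleting the non-simple-normal-crossing locus does not disturb the codimension-$\geq 2$ condition. I do not expect a serious obstacle: the statement follows by combining Theorem \ref{3.5} applied to $X(\Delta_1,k)$ with the elementary fact that liftings and compatible Frobenius liftings restrict to open subschemes.
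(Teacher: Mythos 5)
Your proof is correct and follows essentially the same route as the paper: pass to the smooth open toric subvariety $X(\Delta_1,k)$ given by the $1$-dimensional cones, apply Theorem \ref{3.5} there to get the strong lifting and the compatible Frobenius lifting, and then shrink to the locus where $D$ is simple normal crossing (which costs nothing in codimension) and restrict all the lifted data. The extra details you supply --- that $D$ restricts to a Cartier divisor on the smooth locus and that liftings and compatibility restrict to open subschemes --- are exactly the points the paper leaves implicit.
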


\begin{proof}
Let $V=X(\Delta_1)$, where $\Delta_1$ is the fan consisting of all
1-dimensional cones in $\Delta$. Then it is easy to see that $V$ is
an open subset of $X$ with $\codim_X(X-V)\geq 2$, $V$ is a smooth toric
variety over $k$, and $D|_V$ is a reduced Cartier divisor on $V$.

By Theorem \ref{3.5}, $(V,D|_V)$ has a lifting $(\wt{V},\wt{D|_V})$ over
$W_2(k)$ and $F:V\ra V'$ has a lifting $\wt{F}:\wt{V}\ra \wt{V}'$, which
is compatible with $\wt{D|_V}$. Shrink $V$ into $U$ if necessary,
such that $U$ is an open subset of $X$ with $\codim_X(X-U)\geq 2$,
$U$ is smooth over $k$ and $D|_U$ is simple normal crossing on $U$.
Then $(U,D|_U)$ has a lifting $(\wt{U},\wt{D|_U})$ over $W_2(k)$
and $F:U\ra U'$ has a lifting $\wt{F}:\wt{U}\ra \wt{U}'$, which is
compatible with $\wt{D|_U}$.
\end{proof}

Now, the main theorems are easy consequences of the above more general
results.

\begin{proof}[Proof of Theorem \ref{1.1}]
It follows from Theorem \ref{3.1} and Corollary \ref{3.6}.
\end{proof}

\begin{proof}[Proof of Theorem \ref{1.2}]
It follows from Theorem \ref{3.2} and Corollary \ref{3.6}.
\end{proof}

\begin{proof}[Proof of Theorem \ref{1.3}]
By Kodaira's lemma, we can take an effective $\Q$-divisor $B$ with sufficiently small 
coefficients such that $H-B$ is ample and $\ulcorner H-B\urcorner=\ulcorner H\urcorner$. 
Therefore, we may from the beginning assume that $H$ is an ample $\Q$-divisor. Now, 
the conclusion follows from Theorem \ref{3.4}, Corollary \ref{3.6} and
toric Serre's duality \cite[Theorem 9.2.10(a) and Remark 9.2.11(b)]{clh}.
\end{proof}

The following corollary shows that the main theorems could hold
for more general varieties which are not necessarily toric.

\begin{cor}\label{3.7}
Let $X$ be a smooth projective toric variety with $\dim X\geq 2$,
and $g:Y\ra X$ the composition of some blow-ups along closed points.
Then the conclusions of Theorems \ref{1.1}, \ref{1.2} and \ref{1.3} 
hold for $Y$.
\end{cor}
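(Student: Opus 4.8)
The plan is to verify that $Y$ satisfies the hypotheses of Theorems \ref{3.1}, \ref{3.2}, and \ref{3.4}, since those three statements are exactly the content of the "main theorems" in the generality needed here. For Theorems \ref{1.1} and \ref{1.2} we must produce, for an arbitrary reduced Weil divisor $D$ on $Y$, a required open subset $U$ together with a lifting $(\wt U,\wt{D|_U})$ over $W_2(k)$ and a compatible lifting $\wt F:\wt U\ra\wt U'$ of relative Frobenius; for Theorem \ref{1.3} we additionally need Serre's duality for all Weil divisors on $Y$. The key observation is that $Y$ is smooth (a composition of blow-ups of a smooth variety along closed points is smooth), so we may take $U=Y$ whenever $D$ is simple normal crossing, and more generally shrink to the locus where $D$ is SNC, which has complement of codimension $\geq 2$; there the restriction of any global lifting works. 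Smoothness also makes every coherent sheaf, in particular $\OO_Y(D')$ for any Weil=Cartier divisor $D'$, locally free and hence Cohen-Macaulay, so Serre's duality holds for all Weil divisors and Theorem \ref{3.4} applies. Thus the whole problem reduces to: \emph{$Y$ is strongly liftable over $W_2(k)$, and this lifting can be chosen so that the relative Frobenius lifts compatibly with every effective Cartier divisor.}

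First I would handle the strong liftability and Frobenius-liftability of $X$ itself: this is precisely Theorem \ref{3.5}, which gives a lifting $\wt X$, a compatible lifting $\wt F_X:\wt X\ra\wt X$ of the absolute Frobenius, and liftings of all effective Cartier divisors. Next I would lift the blow-up. Writing $g$ as a composition $Y=Y_r\ra Y_{r-1}\ra\cdots\ra Y_0=X$ of blow-ups along closed points, I proceed by induction: given a $W_2(k)$-lifting $\wt Y_{i}$ with a compatible Frobenius lifting $\wt F_{i}$, a closed point $y\in Y_i$ lifts (since $Y_i$ is smooth and the closed point, being a local complete intersection defined by a regular system of parameters, lifts to a closed subscheme $\wt y\subset\wt Y_i$ flat over $W_2(k)$), and the blow-up $\wt Y_{i+1}=\mathrm{Bl}_{\wt y}\wt Y_i$ is a lifting of $Y_{i+1}$ because blowing up commutes with the flat base change $\spec k\ra\spec W_2(k)$. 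For the Frobenius lifting one uses that $\wt F_i$ sends the ideal of $\wt y$ into its $p$-th symbolic/ordinary power (this is the compatibility built into Theorem \ref{3.5} at each step, and it is preserved because $\wt F_i^*\OO(-\wt y)=\OO(-p\wt y)$), so $\wt F_i$ lifts through the blow-up to $\wt F_{i+1}:\wt Y_{i+1}\ra\wt Y_{i+1}$; alternatively one invokes that a strongly liftable smooth variety stays strongly liftable under blow-up along a liftable smooth center, which is in \cite{xie}. Then I would check strong liftability of $Y$ directly via Proposition \ref{2.10}: given an effective Cartier divisor $E$ on $Y$, push it down and compare with pullbacks of divisors on $X$ together with exceptional divisors (each exceptional divisor of a point blow-up is itself the exceptional divisor of a lifted blow-up, hence visibly lifts), and use that the relevant $H^0$-restriction maps $r:H^0(\wt Y,\wt\LL)\ra H^0(Y,\LL)$ are surjective — surjectivity follows from the surjectivity on $X$ in Theorem \ref{3.5} together with $R^\bullet$-acyclicity of the lifted blow-up morphism, since $\wt g_*\OO_{\wt Y}=\OO_{\wt X}$ and the $H^1(\wt X,p\cdot\wt\LL)$ obstruction in \eqref{es6} can be pushed to $X$.

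With $Y$ strongly liftable and $\wt F:\wt Y\ra\wt Y'$ compatible with every effective Cartier divisor in hand, the argument of Corollary \ref{3.6} applies verbatim with $Y$ in place of the toric variety: for a reduced Weil (=Cartier, by smoothness) divisor $D$ shrink $Y$ to the SNC locus $U$, restrict the global liftings, and conclude. Then Theorem \ref{1.1} for $Y$ follows from Theorem \ref{3.1}, Theorem \ref{1.2} from Theorem \ref{3.2}, and Theorem \ref{1.3} from Theorem \ref{3.4} (using ordinary Serre duality on the smooth projective $Y$). The main obstacle I anticipate is the \emph{compatibility of the Frobenius lifting with the exceptional divisors through the induction}: liftability of blow-ups and of the Frobenius separately is standard, but one must check carefully that at each stage the chosen lift $\wt F_i$ satisfies $\wt F_i^*\OO_{\wt Y_i}(-\wt y)=\OO_{\wt Y_i}(-p\wt y)$ on the center so that it descends to a lift on the blow-up which is still compatible with the total transform of every divisor — in other words, that "strongly liftable together with a compatible Frobenius lift" is a condition stable under blow-up at a liftable point. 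This is the only place where real bookkeeping is required; everything else is a formal consequence of smoothness plus Theorem \ref{3.5}.
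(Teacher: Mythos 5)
Your proposal is correct and follows essentially the same route as the paper: reduce to a single point blow-up and induct, lift the point and the blow-up over $W_2(k)$, descend the Frobenius lifting through the blow-up via the universal property (the inverse image of the lifted point's ideal sheaf becomes the invertible $\OO_{\wt{X}_1}(-p\wt{E})$), and invoke the stability of strong liftability under point blow-ups from \cite{xie}. (One harmless slip: not every coherent sheaf on a smooth variety is locally free --- what you actually need, and what is true, is that $\OO_Y(D)$ is invertible because Weil $=$ Cartier on the smooth $Y$, so Serre duality applies.)
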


\begin{proof}
Let $f:X_1\ra X$ be the blow-up of $X$ along a closed point $P$.
If we can prove the liftability of the relative Frobenius morphism
and the strong liftability of $X_1$, then by Theorems \ref{3.1}, \ref{3.2} and 
\ref{3.4}, the conclusions of Theorems \ref{1.1}, \ref{1.2} and \ref{1.3} hold 
for $X_1$, hence hold for $Y$ by induction.

By Theorem \ref{3.5}, we have a lifting $\wt{X}$ of $X$ over $W_2(k)$
and a lifting $\wt{F}_X:\wt{X}\ra \wt{X}$ of $F_X:X\ra X$ compatible with
any lifting of divisors on $\wt{X}$. By \cite[Lemma 2.5]{xie}, $P\in X$ has
a lifting $\wt{P}\in \wt{X}$. Let $\wt{f}:\wt{X}_1\ra \wt{X}$ be the blow-up
of $\wt{X}$ along $\wt{P}$, $\wt{E}$ the exceptional divisor of $\wt{f}$,
and $\II_{\wt{P}}$ the ideal sheaf of $\wt{P}\in \wt{X}$. Consider the
following diagram:
\[
\xymatrix{
\wt{X}_1 \ar@{.>}[r]^{\wt{F}_{X_1}} \ar[d]_{\wt{f}} & \wt{X}_1
\ar[d]^{\wt{f}} \\
\wt{X} \ar[r]^{\wt{F}_X} & \wt{X}.
}
\]
Since $(\wt{f}\wt{F}_X)^{-1}\II_{\wt{P}}\cdot\OO_{\wt{X}_1}=
\OO_{\wt{X}_1}(-p\wt{E})$ is an invertible sheaf of ideals on $\wt{X}_1$,
by the universal property of blow-up, there is a unique morphism
$\wt{F}_{X_1}:\wt{X}_1\ra \wt{X}_1$ making the above diagram commutative.
It is easy to verify that $\wt{F}_{X_1}:\wt{X}_1\ra \wt{X}_1$ is a lifting of
$F_{X_1}:X_1\ra X_1$ compatible with any lifting of divisors on $\wt{X}_1$.

On the other hand, by \cite[Proposition 2.6]{xie}, $X_1$ is strongly liftable
over $W_2(k)$, which completes the proof.
\end{proof}

For instance, smooth projective rational surfaces satisfy the condition in 
Corollary \ref{3.7}, hence the conclusions of Theorems \ref{1.1}, \ref{1.2} 
and \ref{1.3} hold for smooth projective rational surfaces.

\small

\textsc{School of Mathematical Sciences, Fudan University,
Shanghai 200433, China}

\textit{E-mail address}: \texttt{qhxie@fudan.edu.cn}

\end{document}